\documentclass[12pt]{article}


\setlength{\textheight}{22cm}
\setlength{\textwidth}{15cm}
\setlength{\oddsidemargin}{-0.4cm}
\setlength{\evensidemargin}{-0.4cm}
\setlength{\topmargin}{-1cm}
\hfuzz=20pt

\newlength{\stefan}
\setlength{\stefan}{\linewidth}
\addtolength{\stefan}{1.0cm}


\usepackage{amssymb}
\usepackage{amsmath, amsthm, amsfonts, makeidx}
\usepackage[all]{xy}

\usepackage[usenames,dvipsnames]{color}

\DeclareMathSymbol{\subsetneq}{\mathord}{AMSb}{"26}

\newtheorem{lemma}{Lemma}[section]

\newtheorem{theorem}[lemma]{Theorem}

\newtheorem{proposition}[lemma]{Proposition}
\newtheorem{corollary}[lemma]{Corollary}
\theoremstyle{definition}

\newtheorem{definition}[lemma]{Definition}

\newtheorem{remark}[lemma]{Remark}
\newtheorem{conjecture}[lemma]{Conjecture}

\newcommand{\lp}{\longrightarrow}

\newcommand{\mb}{\mathbb}

\newcommand{\F}{\mb{F}}

\newcommand{\Z}{\mb{Z}}
\newcommand{\N}{\mb{N}}
\newcommand{\Q}{\mb{Q}}

\newcommand{\desda}{\Longleftrightarrow}
\newcommand{\Aff}{\mathit{Aff}}

\renewcommand{\ker}{\operatorname{ker}}

\renewcommand{\deg}{\operatorname{deg}}

\newcommand{\rad}{\operatorname{rad}}

\newcommand{\Jac}{\operatorname{Jac}}
\newcommand{\GL}{\operatorname{GL}}

\newcommand{\GA}{\operatorname{GA}}
\newcommand{\SKE}{\operatorname{SKE}}
\newcommand{\KE}{\operatorname{KE}}

\newcommand{\kar}{\operatorname{char}}

\newcommand{\MA}{\operatorname{ME}}
\newcommand{\TA}{\operatorname{TA}}
\newcommand{\ME}{\operatorname{ME}}

\newcommand{\BA}{\operatorname{BA}}

\newcommand{\SA}{\operatorname{SA}}
\newcommand{\STA}{\operatorname{STA}}

\title{A new formulation of the Jacobian Conjecture in characteristic $p$}
\author{
\begin{tabular}{ll}
Stefan Maubach & Abdul Rauf$\footnote{Supported by DAAD grant ( funding program ID 57076385).}$\\ \small Hogeschool Rotterdam & \small Jacobs University Bremen\\\small
Rotterdam, The Netherlands& \small Bremen, Germany \\ \small  s.j.maubach@hr.nl & \small a.rauf@jacobs-university.de
\end{tabular}}

\begin{document}

\maketitle

\begin{abstract}
The Jacobian Conjecture uses the equation $\det(\Jac(F))\in k^*$, which is a very short way to write down many equations putting restrictions on the coefficients of a polynomial map $F$. In characteristic $p$ these equations do not suffice to (conjecturally) force a polynomial map to be invertible. In this article, we describe how to construct the  conjecturally sufficient equations in characteristic $p$ forcing a polynomial map to be invertible.
This provides an (alternative to  Adjamagbo's formulation)  definition of the Jacobian Conjecture in characteristic $p$.
We strengthen this formulation by investigating some special cases and by linking it to the regular Jacobian Conjecture in characteristic zero.
\end{abstract}

Keywords: the Jacobian Conjecture, positive characteristics, ideals.


\section{Introduction}

\subsection{Notations and definitions}
All rings are commutative with 1.
We denote $R^{[n]}=R[x_1,\ldots, x_n]$ the polynomial ring in $n$ variables over a ring $R$.
A polynomial map (or polynomial endomorphism) $F$  with coefficients in a ring $R$ is a list of polynomials $(F_1,\ldots, F_n)$ where $F_i\in R^{[n]}$.
Such a polynomial map provides an endomorphism of $R^{[n]}$ as well as a map $R^n\lp R^n$. Since $R$ can be a finite field/ring, we cannot identify these viewpoints. (A polynomial map can induce the identity map $R^n\lp R^n$ while not being the identity endomorphism.)

We define $\MA_n(R)$ as the set of polynomial endomorphisms on $R^{[n]}$. This forms a monoid w.r.t. composition, and the subset of invertible elements in this monoid is denoted by $\GA_n(R)$ and is the group of polynomial automorphisms. We define $\deg(F)=\max(\deg(F_1),\ldots, \deg(F_n))$ for $F\in \MA_n(R)$.
The set of affine automorphisms $\Aff_n(R)$ is $\{F\in \GA_n(R) ~|~\deg(F)=1\}$. A polynomial map $F\in \MA_n(R)$ is triangular if $F_i\in R[x_i,\ldots, x_n]$.
If $F$ is a triangular automorphism and $R$ is a domain, it turns out to be of the form $(r_1x_1+f_1,\ldots, r_nx_n+f_n)$ where $r_i\in R^*$ and $f_i\in k[x_{i+1},\ldots, x_n]$. The set of triangular automorphisms is denoted by $\BA_n(R)$. Both $\BA_n(R)$ and $\Aff_n(R)$ turn out to be subgroups of $\GA_n(R)$.
We define $\TA_n(R):=<\BA_n(R), \Aff_n(R)>$, the tame automorphism group.

We define $\SA_n(R)=\{ F\in \GA_n(R) ~|~\det(\Jac(F))=1\}$. Similarly, we define $\STA_n(R)=\SA_n(R)\cap \TA_n(R)$  etc.

For each of these sets, we define $\MA^d_n(R)=\{F\in \MA_n(R) ~|~\deg(F)\leq d\}$, $\GA^d(R)=\GA_n(R)\cap \MA_n^d(R)$ etc.

We use the notation $x^{\alpha}=x_1^{\alpha_1}\cdots x_n^{\alpha_n}$ if $\alpha\in \N^n$.

\subsection{The Jacobian Conjecture}

The Jacobian Conjecture is a quite notorious conjecture in the field of Affine Algebraic Geometry. One formulation is:\\

{\bf (JC(R,n))}: If $F\in \MA_n(R)$ where $R$ is a domain of characteristic zero, then $\det(\Jac(F))\in R^*$ implies that $F\in \GA_n(R)$. \\

For many details we can refer to the book \cite{E00}. The conjecture is widely open even in the case $n=2$ (and trivial in dimension 1). Proving $JC(R,n)$ for any $R$ of characteristic zero yields $JC(R,n)$ true for all domains $R$ of characteristic zero.

Naievely translating the Jacobian Conjecture into characteristic $p$ yields counterexamples, already in dimension 1 even:  the map $x-x^p$ is not injective but has $\det(\Jac(x-x^p))=1$. Therefore, Adjamagbo defined in \cite{ADA92} a possible version of the Jacobian Conjecture for fields $k$ with characteristic $\kar(k)=p$:\\

{\bf (AJC(n,p))}: Let $F=(F_1,\ldots, F_n)$ where $F_i\in k[x_1,\ldots, x_n]$ and $k$ a field of characteristic $p$. Assume that $\det(\Jac(F))\in k^*$ and additionally assume that  $p$ does not divide $[ k(x_1,\ldots,x_n): k(F_1,\ldots, F_n)]$.
Then $F$ has a polynomial inverse. \\

The ``$\kar(k)$ does not divide $[ k(x_1,\ldots,x_n): k(F_1,\ldots, F_n)]$'' requirement seems to exclude all pathological counterexamples to the Jacobian Conjecture, but adds another difficult requirement to the (deceptively simple looking but) difficult equation $\det(\Jac(F))\in k^*$. Adjamagbo showed that knowing $AJC(n,p)$ for all $p$ implies $JC(n,k)$ for all $k$.

We approach the JC in characteristic $p$ from  a different perspective: let us write down a generic polynomial automorphism of degree 2, having affine part identity:
\[ F=(x+a_1x^2+a_2xy+a_3y^2, y+b_1x^2+b_2xy+b_3y^2)\]
Then, in characteristic zero, the equation $1=\det(\Jac(F))$ yields several equations on the coefficients:
\[ \begin{array}{rl}
1=&\det(Jac(F))\\
=&1+\\
&(2a_1+b_2)x+\\
&(a_2+2b_3)y+ \\
&(2a_1b_2+2a_2b_1)x^2+\\
&(2b_2a_2+4a_1b_3+4a_3b_1)xy+\\
&(2a_2b_3+2a_3b_2)y^2\\
\end{array} \]
Then apparently, the equations $2a_1+b_2=0,$ $a_2+2b_3=0$, etc. are exactly the equations one needs to ensure that $F$ is invertible in characteristic zero.
However, in characteristic 2 the above equations are not enough to conclude that $F$ is invertible (in fact, some equations completely vanish), as an example $(x+x^2,y)$ shows.
Therefore, one needs extra equations in characteristic $p$. In fact, thinking a little deeper, we {\em know} that such equations must exist.  (Without going into detail, the equations must be the closure of the
 set of automorphisms having determinant Jacobian 1 inside $\MA_n^d(k)$, where closure is in a natural topology \cite{Shafarevich1,Shafarevich2, Stampfli})
We only have to {\em find} them.
In this article we  claim that we have found them (at least conjecturally). In fact, what we are doing is refining the regular Jacobian Conjecture so that it makes sense in characteristic $p$ also.

We make a remark on Adjamagbo's formulation w.r.t. the above considerations: note that if there exists at least one counterexample $F$ to the Jacobian Conjecture in characteristic zero, then $[k(x_1,\ldots,x_n):k(F_1,\ldots,F_n)]=d>1$. It might very well be that $F\mod{p}$ is an interesting map for any prime $p$. But, if $p|d$, then Adjamagbo's formulation excludes this example, while one could argue that a formulation of the JC  in characteristic $p$ should not.
One could say that in this case $p\nmid  [k(x_1,\ldots,x_n):k(F_1,\ldots,F_n)]$ adds {\em too many} equations, or perhaps the {\em wrong} equations.

\section{Initial considerations}

Let us consider the degree 2 example of the previous section.
One of the equations is $2a_1b_2+2a_2b_2$. In characteristic zero, this implies the equation $a_1b_2+a_2b_1$. Looking at it like this, it seems strange to exclude this latter equation in characteristic 2. Also, if we define the ideal
\[ I=(2a_1+b_2,a_2+2b_3,2a_1b_2+2a_2b_1,2b_2a_2+4a_1b_3+4a_3b_1, 2a_2b_3+2a_3b_2) \]
in the ring $\Q[a_1a_2,a_3, b_1,b_2,b_3]$, then any invertible polynomial map of degree 2 over $\Q$ will have coefficients which satisfy
{\em any} equation of $I$. Even more, they satisfy any equation appearing in $\rad(I)$.
Again, in the same vein as before, we can argue that any equation appearing in $\rad(I)$, should appear as equation in characteristic
$p$, also. Hence, in this way we can give some universal equations which should be the equations which work in any characteristic.

This is essentially the formulation of the Jacobian Conjecture in characteristic $p$ we introduce in the next section, but we have to use more formal language.

\section{A new formulation of the Jacobian Conjecture in characteristic $p$}

Given $F=(F_1,\ldots,F_n)\in \MA_n(R)$ where $R$ is some ring, we can write $F_i=\sum_{\alpha\in \N^n} c_{i, \alpha}x^{\alpha}$.
We can also make the infinitely generated ring $C_R:=R[c_{i,\alpha} ~|~1\leq i\leq n, \alpha\in \N^n]$
where the $c_{i,\alpha}$ are variables. One can now make the universal polynomial map of degree $d\in \N$ by taking the polynomial map in $ \MA_n(C_R)$ which has coefficients the variables $c_{i,\alpha}$. Lets say that $C_{R,d}$ is the finitely generated ring generated by the coefficients up to and including degree $d$. (I.e. $C_R$ is the union, or direct limit, of the rings $\ldots \subset C_{R,d}\subset C_{R,d+1}\subset \ldots$. )

\begin{definition} Let $F[d]\in \MA_n(C_{\Q,d})$ be the universal polynomial endomorphism of degree $d$ having affine part identity.
Computing $\det\Jac(F[d])-1=\sum_{\alpha\in \N^n} E_{\alpha} x^{\alpha}$ yields a polynomial in $x_1,\ldots,x_n$ having coefficients $E_{\alpha}\in C_{\Q,d}$. Define the ideal  $I_{\Q}^d=(E_{\alpha} ~|~\alpha\in \N^n)$ in $C_{\Q,d}$ generated by the equations found in the formula $\det\Jac(F[d])=1$.
We define the ideal $I_{\Q}$ in $C_{\Q}$  as the inverse limit of the canonical chain $\ldots\lp I_{\Q}^{d+1}\lp I_{\Q}^d\lp \ldots$.
It coincides with the equations found in the coefficients of $\det\Jac(F[\infty])=1$, where $F[\infty]$ is the power series with universal coefficients.
\end{definition}
\begin{definition} We define $J_{\Z}:=\rad(I_{\Q})\cap C_{\Z}$ as the ``ideal of integer keller equations''.
This ideal captures the universal equations described in the previous section.
If $R$ is a ring, we define $J_R:=J_{\Z}\otimes R$ as an ideal in the ring $C_R$.  It is the ideal in $C_R$
generated by those same equations (in characteristic zero) or by those equations modulo $p$ (in characteristic $p$). In particular, we
 define $J_p:=J_{\F_p}=J_{\Z} \mod p$ as an ideal in $C_{\F_p}$. Similarly we define $J_{\Z}^{d}:=\rad(I_{\Q}^d)\cap C_{\Z,d},$ $J_R^d:=J_{\Z}^d\otimes R$ and $J_p^d:=J_{\Z}^d \mod p$ where $d=deg(F).$

Let $N_d$ be the number of variables in $F[d]$ (i.e. the dimension of the ring $C_{R,d}$).
We say that $v\in R^{N_d}$ satisfies $J_R^d$ if $f(v)=0$ for all $f\in J_R^d$.
We say that ``$v\in R^{N_d}$ satisfies $J_R$'' if $v\in R^{N_d}$ satisfies $J_R^d$.

We can identify $F\in \MA_n(R)$ by the vector of coefficients $v(F)$ of $F$; in particular, if $F\in \MA_n(R)$ we say that ``$F$ satisfies $J_R$ ($J_R^d$)'' if $v(F)$ satisfies $J_R$ ($J_R^d$).
\end{definition}
Throughout, we will write the elements of $J_{R}=J_{\Z}\otimes R$ by $\sum_i{e_i}h_i$ instead of $\sum_i{e_i}\otimes h_i$, where $e_i\in J_{\Z}$ and $h_i\in R$ for all $i$ (for simplicity, we will omit the tensor notation in this manuscript).

\begin{definition} We say that $F\in \MA_n(R)$ is a {\bf strong Keller map} if $F$ satisfies $J_R$ (or equivalently $F\in \MA_n(R)$ is a {\bf strong Keller map} if $F$ satisfies $J_R^d$ where $deg(F)=d$).
We denote the set of strong Keller maps by $\SKE_n(R)$ and the set of Keller maps by $\KE_n(R).$
\end{definition}

\begin{conjecture}\textbf{Jacobian conjecture over any field (in particular, in positive characteristics)}
({$\bf \mathcal{JC}(k,n)$})\\ Let $k$ be a field (of characteristic $p$) and $F\in \MA_n(k)$ be a strong Keller map. Then $F\in \GA_n(k)$.
\end{conjecture}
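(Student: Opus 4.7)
The plan is to reduce $\mathcal{JC}(k,n)$ to the classical Jacobian Conjecture in characteristic zero. The characteristic zero case is essentially free: when $\kar k = 0$ the inclusion $I_\Q^d \subseteq \rad(I_\Q^d)$ gives $I_\Q^d\otimes k \subseteq J_k^d$, so every strong Keller map $F \in \SKE_n(k)$ is in particular an ordinary Keller map. A standard descent from $k$ to a finitely generated subfield of $\overline{\Q}$, and then to $\C$, reduces $\mathcal{JC}(k,n)$ for all characteristic zero fields to the classical $\mathrm{JC}(\C,n)$, which we grant.

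For positive characteristic the plan is a lifting argument. Fix $F \in \SKE_n(k)$ of degree $d$ with $\kar k = p$; after reducing the finitely many coefficients we may assume $k = \F_q$, so that $v(F)$ is an $\F_q$-point of the affine $\Z$-scheme $X_d := \spec(C_{\Z,d}/J_\Z^d)$. By the definition $J_\Z^d = \rad(I_\Q^d) \cap C_{\Z,d}$, the quotient $C_{\Z,d}/J_\Z^d$ embeds into $C_{\Q,d}/\rad(I_\Q^d)$ and is therefore $\Z$-torsion-free; hence $X_d$ is flat over $\spec\Z$, and a clearing-denominators argument shows $J_\Z^d \otimes \Q = \rad(I_\Q^d)$. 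Granting the classical JC, the generic fiber $X_d \times_\Z \Q$ is precisely the reduced variety of polynomial automorphisms of degree $\leq d$ with identity affine part. The goal is then to lift $v(F)$ to a $W(\F_q)$-valued point $\tilde v$, apply classical JC to the corresponding $\tilde F \in \MA_n(W(\F_q))$ to produce $\tilde F^{-1}$, and reduce the inverse modulo $p$ to obtain a polynomial inverse of $F$.

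The main obstacle is the lifting step: an $\F_q$-point of a flat $\Z$-scheme need not lift to $W(\F_q)$ without a smoothness or unobstructedness hypothesis, and the ideal $J_\Z^d$ is defined abstractly via the radical with no explicit generators, so $X_d$ resists direct geometric analysis. I would attempt to establish smoothness of $X_d$ at $v(F)$ via a tangent-space computation on the linearized Keller equations, possibly after first stratifying $X_d$ by the generic degree of the formal inverse of $F[d]$ and proving each stratum is smooth. Combined with classical degree bounds for polynomial inverses (\cite{E00}) to ensure that the reduction $\tilde F^{-1} \bmod p$ still has bounded degree, such a smoothness-plus-lifting argument would close the proof; but the abstract nature of $\rad(I_\Q^d)$ makes every one of these steps substantial, and this is precisely why the statement remains a conjecture.
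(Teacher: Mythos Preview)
The statement is a \emph{conjecture}; the paper does not prove it and offers no proof to compare against. Your characteristic-zero reduction is correct and is exactly the content of Lemma~\ref{L3.8} in the paper, which shows $\SKE_n(k)=\KE_n(k)$ whenever $\kar(k)=0$, so that $\mathcal{JC}(k,n)$ collapses to the classical Jacobian Conjecture there.

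For positive characteristic, what you are outlining is not a proof of $\mathcal{JC}(k,n)$ but a strategy for the implication ``classical JC $\Rightarrow$ $\mathcal{JC}(k,n)$''. The paper proves precisely this implication in Proposition~\ref{lemma1}(1), but only \emph{conditionally} on Conjecture~\ref{conj2}, which asserts that every strong Keller map over $k$ is the reduction of a Keller map over a characteristic-zero $\Z$-algebra. Your Witt-vector lifting argument is a direct attack on Conjecture~\ref{conj2} via deformation theory, and you correctly identify smoothness of $X_d=\spec(C_{\Z,d}/J_\Z^d)$ at $v(F)$ as the missing ingredient. The paper makes no attempt in that direction; it simply axiomatizes the lifting as a separate conjecture. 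So your approach is not wrong, but it trades one open problem for another of comparable difficulty, as your last sentence acknowledges.

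Two concrete gaps worth flagging. First, the step ``we may assume $k=\F_q$'' is false as stated: the subfield of $k$ generated by the coefficients of $F$ is finitely generated over $\F_p$ but may have positive transcendence degree (e.g.\ $k=\F_p(t)$), so it need not be finite. You would need a Cohen ring of that residue field rather than $W(\F_q)$; the paper handles this instead via the auxiliary ring $\Lambda_k$ of Definition~\ref{def1}. Second, your flatness argument for $X_d$ over $\spec\Z$ is correct (torsion-freeness follows exactly as you say from $J_\Z^d=\rad(I_\Q^d)\cap C_{\Z,d}$), but flatness alone does not guarantee that closed-fiber points lift, so the entire weight rests on the unestablished smoothness, and the suggested stratification by degree of the formal inverse remains purely heuristic.
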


So, an alternative definition of $\mathcal{JC}(k,n)$ is  ``$\SKE_n(k)=\GA_n(k)$''. We will use curly letter notation $\mathcal{JC}$ to represent Jacobian conjecture in characteristic $p$.

Of course, we still need to show that the above formulation coincides with the regular formulation in case the field is of characteristic zero. However, this will follow directly from lemma \ref{L3.8}.

Note that we only defined the conjecture for fields of any characteristic, but with a slight modification one can define it for all domains (of any characteristic). However, we will stick with this formulation in this first encounter.

Before we will study the validity of this conjecture, we will introduce some facts and concepts we will use afterwards.

\section{Basic facts}

Some basic facts are mentioned in the following remark about the map $F\mod p$ for $F\in\MA_n(\Z).$ They are used in various places without mentioning.
\begin{remark} Let $F\in \MA_n(\Z)$. Then
\[ (\det\Jac(F))\mod p=\det(\Jac(F)\mod p) = \det\Jac(F \mod p)\] . In particular:
\begin{itemize}
\item $\det\Jac(F)=1\mod p \desda \det\Jac(F\mod p)=1\mod p$.
\item If $F\in\MA_n(\Z)$ such that $F\mod p\in\SKE_n(\F_p)$, then $\det\Jac(F)=1+pH$ for some $H\in\MA_n(\Z)$.
\item $(F\circ G)\mod p = (F\mod p)\circ (G\mod p)$, and $\det\Jac(F\circ G)\mod p=\det\Jac (F\mod p\circ G\mod p).$
\end{itemize}
\end{remark}

\begin{proof} Writing out the equations $\det(\frac{\partial (F_i\mod p)}{\partial x_j})$ we see that checking the remark essentially comes down to checking that if $c_{\alpha} x^{\alpha}$ is a generic monomial where $\alpha \in \N^n$ and $c_{\alpha}\in \Z$, then
\[ \frac{\partial c_{\alpha} x^{\alpha}}
{\partial x_i}
\mod p =
\frac{\partial c_{\alpha} x^{\alpha}\mod p}
{\partial x_i}
\]
which is true (just check the case where $p$ divides $c_{\alpha}$ or $p$ divides $\alpha_i$ separately).
\end{proof}
The interesting thing is that it is hard to grasp of some $F\in \MA_n(\Z)$ what conditions imply $F\mod p \in \SKE_n(\F_p)$, which is stronger than just $\det\Jac(F))=1 \mod p$.

The following lemma will be used several times.
\begin{lemma}\label{ideals}
 Let $F\in\ME_n(R).$ If $F$ satisfies the ideal $I_{\Q}$ then it satisfies the ideal $J_{\Z}.$
\end{lemma}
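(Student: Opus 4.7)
The plan is a direct unwinding of the definitions. Let $\phi_F\colon C_\Z\to R$ be the evaluation homomorphism sending each coefficient variable $c_{i,\alpha}$ to the coefficient of $x^\alpha$ in $F_i$. I would read the hypothesis ``$F$ satisfies $I_\Q$'' as the condition that $\phi_F$ annihilates every element of $I_\Q\cap C_\Z$; this is strictly stronger than merely asking $\det\Jac(F)=1$ once the characteristic is positive, since $I_\Q\cap C_\Z$ can be larger than the $C_\Z$-ideal generated by the $E_\alpha$ (in dimension one, for example, $p\,c_p=E_{p-1}\in I_\Q$, so already $c_p=\tfrac{1}{p}E_{p-1}$ lies in $I_\Q\cap C_\Z$).

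Now fix any $g\in J_\Z=\rad(I_\Q)\cap C_\Z$. By the definition of radical, there exists $n\geq 1$ with $g^n\in I_\Q$, and since $g^n$ already lies in $C_\Z$ this places $g^n$ inside $I_\Q\cap C_\Z$. Applying $\phi_F$ gives $\phi_F(g)^n=\phi_F(g^n)=0$ in $R$, so $\phi_F(g)$ is nilpotent; when $R$ is reduced (in particular in the intended application $R=\Z$, and more generally over any integral domain), this forces $\phi_F(g)=0$, which is precisely the statement that $F$ satisfies $J_\Z$.

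The only delicate step is the final passage from $\phi_F(g)^n=0$ to $\phi_F(g)=0$: it is automatic for $R$ reduced, but without such a hypothesis one only recovers that every element of $J_\Z$ evaluates to a nilpotent of $R$. Since the lemma will be invoked with $R$ an integral domain (typically $R=\Z$, in order to lift Keller maps to strong Keller maps modulo $p$ via the remark preceding the lemma), this presents no genuine obstacle, and the whole argument is essentially a one-line radical-membership chase wrapped around the correct interpretation of the hypothesis.
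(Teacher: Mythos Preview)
Your proof is correct and follows essentially the same radical-membership argument as the paper: pick $g\in J_\Z$, find $n$ with $g^n\in I_\Q\cap C_\Z$, evaluate to zero, and conclude via the domain (or reduced) hypothesis on $R$. Your treatment is in fact slightly more careful than the paper's, which writes $Q^n=\sum_i h_ie_i$ with $h_i\in C_\Z$ (not obviously justified, since the $e_i$ generate $I_\Q$ only over $C_\Q$) and invokes ``$C_\Z$ is an integral domain'' where the relevant ring is really $R$.
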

\begin{proof}
 Consider $Q\in J_{\Z}.$ Note that $J_{\Z}=\rad(I_{\Q})\cap C_{\Z} =\rad(I_{\Q}\cap C_{\Z})$ thus there exist $n\in\Z$ such that $Q^n\in I_{\Q}\cap C_{\Z}\subset C_{\Z}.$ Assume $I_{\Q}$ is generated by $\{e_i\}_{i\in\Omega}$, then $Q^n=\sum_{i}h_ie_i$ for $h_i\in C_{\Z}$ for all $i.$ Thus $Q^n(\nu(F))=\sum_{i}h_i(\nu(F))e_i(\nu(F))=0$ since $F$ satisfies $e_i\in I_{\Q}$. Hence $Q(\nu(F))=0$ as $C_{\Z}$ is integral domain.

\end{proof}
\begin{lemma} \label{L3.8} Let $R$ be a ring with $\kar(R)=0$, then $F\in\SKE_n(R)$ if and only if $F\in\KE_n(R).$
\end{lemma}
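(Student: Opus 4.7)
The plan is to prove the two inclusions separately. Only the reverse $\KE_n(R) \subseteq \SKE_n(R)$ will use $\kar(R) = 0$; the forward direction holds over any ring.

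For $\SKE_n(R) \subseteq \KE_n(R)$, I would start by observing that each coefficient $E_\alpha$ of $\det\Jac(F[d]) - 1$ already lies in $C_{\Z,d}$, since differentiation of monomials with integer coefficients preserves integrality. Hence $E_\alpha \in I_\Q \cap C_\Z \subseteq \rad(I_\Q) \cap C_\Z = J_\Z$, and so $E_\alpha \otimes 1 \in J_R$. If $F$ satisfies $J_R$ then each $E_\alpha(v(F)) = 0$ in $R$, which is precisely $\det\Jac(F) = 1$, i.e., $F \in \KE_n(R)$. This half is really just unpacking the definitions.

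For the reverse inclusion I would invoke lemma \ref{ideals}. Given $F \in \KE_n(R)$, all $E_\alpha(v(F)) = 0$, so $F$ satisfies the generators of $I_\Q$, and by $C_\Q$-linearity $F$ satisfies $I_\Q$ — once one can make sense of evaluating polynomials with $\Q$-coefficients at $v(F) \in R^{N_d}$. This is the place where $\kar(R) = 0$ enters: it allows me to view $v(F)$ inside $R \otimes_\Z \Q$ (or, after reducing to the domain case, inside $\mathrm{Frac}(R)$), so evaluation is legal and nonzero integers are invertible. Lemma \ref{ideals} then yields $q(v(F)) = 0$ in that larger ring for every $q \in J_\Z$. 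Since $q \in C_\Z$ and $v(F) \in R^{N_d}$, the value $q(v(F))$ actually lies in $R$, so the vanishing descends to $R$. Finally, any element of $J_R = J_\Z \otimes R$ is an $R$-combination $\sum q_i h_i$, which therefore evaluates to $\sum 0 \cdot h_i = 0$, giving $F \in \SKE_n(R)$.

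The delicate step, and the main obstacle, is exactly this evaluation/denominator-clearing manoeuvre: lemma \ref{ideals} is essentially an ``extract an $m$th root of zero'' argument that implicitly wants to work inside a reduced/integral ring, so one has to verify that $\kar(R) = 0$ really suffices — or first reduce to the domain case by factoring through minimal primes — before the radical membership $Q^m \in I_\Q$ can be downgraded to $Q(v(F)) = 0$ in $R$ itself. Once that subtlety is settled, the rest is pure bookkeeping, since the proof reduces to the tautology that satisfying $\rad(I_\Q)$ and satisfying $I_\Q$ coincide on points living in a reduced ring.
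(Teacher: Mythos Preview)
Your argument is correct and follows essentially the same route as the paper's: both directions hinge on the containment $I_\Q\cap C_\Z\subseteq J_\Z$, and the reverse inclusion is obtained in both cases via lemma~\ref{ideals}.

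There is one small difference worth noting. For the forward direction you observe directly that the generators $E_\alpha$ already lie in $C_\Z$, hence in $J_\Z$, so $E_\alpha\otimes 1\in J_R$ and any strong Keller map is Keller over an arbitrary ring; this is cleaner than the paper's version, which passes through all of $I_\Q$ by writing each $e\in I_\Q$ as $f/m$ with $f\in I_\Q\cap C_\Z$ and dividing by $m$ (so the paper's forward argument also tacitly uses $\kar(R)=0$). Your caution about the reduced/domain hypothesis hidden in lemma~\ref{ideals} is well placed --- the paper's line ``as $C_\Z$ is integral domain'' should really refer to $R$, and your suggestion to reduce to the domain case is exactly the missing justification.
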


\begin{proof} Let $F\in SKE_n(R),$ then $\forall Q\in J_R$ we have $Q(\nu(F))=0$. Since $f\otimes1\in J_R$ for $f\in J_{\Z}$ and $1\in R$ thus $f(\nu(F))=0$ for all $f\in J_{\Z}.$
As $I_{\Q}\cap C_{\Z}\subseteq J_{\Z},$ thus $f(\nu(F))=0$ for all $f\in I_{\Q}\cap C_{\Z}.$ Since for any $e\in I_{\Q}$ we can find $f\in I_{\Q}\cap C_{\Z}$ such that $e=\frac{f}{m}$ for some $m\in\Z.$ Thus $e(\nu(F))=0$ for all $e\in I_{\Q}.$ Hence $F\in\KE_n(R).$ Conversely suppose that $F$ is a Keller map, then $F$ satisfies $I_{\Q}.$ Thus by lemma \ref{ideals} we have $e(\nu(F))=0$ for all $e\in J_{\Z}.$
  Now for any $\sum_{i}e_ir_i\in J_R$ we have $\sum_{i}e_ir_i(\nu(F))=\sum_{i}e_i(\nu(F))r_i(\nu(F))=0,$ where $r_i\in R$ and $e_i\in J_{\Z}$ for all $i.$ Thus $F$ is strong Keller map.
 \end{proof}
\begin{lemma}\label{rem1}
$\SKE_n(k)\subset \SKE_n(\acute{k})$ for any fields $k\subset \acute{k}$ of positive characteristics.
\end{lemma}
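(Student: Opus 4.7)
The plan is to reduce checking membership in $\SKE_n(\acute{k})$ to checking that a distinguished set of generators vanishes at $\nu(F)$, and then to observe that these generators are the same whether one works over $k$ or over $\acute{k}$.

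First I would observe that, by the definition $J_R = J_{\Z}\otimes R$, the ideal $J_R$ of $C_R$ is generated as an ideal by the image of $J_{\Z}$ under the canonical map $C_{\Z}\to C_R$. Consequently, an element $F\in \MA_n(R)$ lies in $\SKE_n(R)$ if and only if $e(\nu(F))=0$ in $R$ for every $e\in J_{\Z}$: once the generators vanish, any ideal combination $\sum_i f_i e_i$ with $f_i\in C_R$ evaluates to $\sum_i f_i(\nu(F))\cdot 0=0$. This is really the same bookkeeping already used in Lemma \ref{L3.8}.

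Now, assuming $F\in \SKE_n(k)$, the reduction above gives $e(\nu(F))=0$ in $k$ for each $e\in J_{\Z}$. Writing $\iota\colon k\hookrightarrow \acute{k}$ for the field inclusion, we have $\iota(e(\nu(F)))=0$ in $\acute{k}$. Since the coefficients of $e$ are integers, evaluation commutes with $\iota$: $\iota(e(\nu(F)))=e(\iota(\nu(F)))$, where on the right $\nu(F)$ is regarded as an element of $\acute{k}^{N_d}$ via $k^{N_d}\subset \acute{k}^{N_d}$. Hence every $e\in J_{\Z}$ vanishes at $\nu(F)$ in $\acute{k}$, and by the first-paragraph reduction $F\in \SKE_n(\acute{k})$.

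There is no substantial obstacle here; the argument is a routine compatibility between extension of scalars and evaluation of polynomials with integer coefficients. In fact the positive-characteristic hypothesis plays no role in this argument, and the same proof shows $\SKE_n(k)\subset \SKE_n(\acute{k})$ for any field extension $k\subset \acute{k}$. The only thing worth flagging is that one should not be tempted to argue with the description ``$J_{\acute{k}}=J_p\otimes_{\F_p}\acute{k}$'' alone, because elements of $J_{\acute{k}}$ are not in general finite $\acute{k}$-linear combinations of elements of $J_p$ but rather $C_{\acute{k}}$-linear combinations; passing through the integer generators $J_{\Z}$ sidesteps this and makes the inclusion transparent.
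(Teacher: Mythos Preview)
Your proof is correct and rests on the same core observation as the paper's: since $J_R$ is generated by the image of $J_{\Z}$, membership in $\SKE_n(R)$ is detected by the vanishing of the integer equations $e\in J_{\Z}$ at $\nu(F)$, and this vanishing is clearly preserved under a field inclusion. The paper's argument takes a small detour through the subfield $k_0\subset k$ generated by the coefficients of $F$ before reaching the same conclusion; your route through $J_{\Z}$ directly is cleaner and, as you note, shows the positive-characteristic hypothesis is irrelevant.
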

\begin{proof}

Let $F\in\SKE_n(k).$ Consider $k_0$ be the subfield of $k$ generated by the coefficients of $F$ then $F\in\SKE_n(k_0)$ and so $F$ satisfies the ideal $J_{k_0}.$ Since $J_{k_0}\subset J_{\acute{k}},$ it is obvious to see  $q(\nu(F))=0$ for any $q\in {J_{\acute{k}}\setminus J_{k_0}}$ (as $q$ does not involve any coefficient of $F$ by definition of $J_{\acute{k}}$). Thus $F$ satisfies the ideal $J_{\acute{k}}$ and so $F\in \SKE_n(\acute{k}).$
\end{proof}

\section{Two surjectivity conjectures}

Given $F\in \GA_n(\Z)$ we can define $F\mod p$ for any prime $p$, yielding an element of $\GA_n(\F_p)$. If we additionally assume that $p\not | \det(\Jac(F))$ then $F\mod{p} \in \GA_n(\F_p)$, even. This yields the natural map $\pi: \SA_n(\Z)\lp \SA_n(\F_p)$. The following fact is not that difficult to prove:

\begin{remark}
$\pi(\STA_n(\Z))=\STA_n(\F_p)$.
\end{remark}

The reason for this is that (1) any affine or triangular map having determinant Jacobian 1 has a preimage under $\pi$, (2) any tame automorphism of determinant Jacobian 1 can indeed be written as a composition of affine and triangular automorphisms of determinant Jacobian 1. (See  \cite{MR14} lemma 3.4.)

Now an obvious question is whether the map $\pi : \SA_n(\Z)\lp \SA_n(\F_p)$ is surjective or not; this question is interesting as nonsurjectivity would yield non-tame maps due to the above remark. This is part of the topic of the papers \cite{MR14, M03, MW11}.
\begin{definition}\label{def2}
Let $R$ be a $\Z$ algebra and $k$ be a field such that we have a  surjective ring homomorphism $R\lp k$. We can extend it naturally from polynomial maps over $R$ to polynomial maps over $k.$ We denote this extended map by $\pi.$
\end{definition}
We notice that corresponding to each automorphism $F\in\SA_n(\Z)$ we have $F\mod p\in\SA_n(\F_p),$ but there may exist some automorphisms  $f\in\SA_n(\F_p)$ such that $\pi^{-1}(f)\notin\SA_n(\Z).$
We conjecture the following for a $\Z$ algebra $R$ and a field $k$:
\begin{conjecture} \label{conj1} Let $R$ be a $\Z$ algebra and $k$ be any field. If we have a surjective ring homomorphism $R\lp k,$ then we have
\begin{enumerate}
\item $\pi(\SA_n(R))=\SA_n(k)$.
\item $\pi^{-1}(\SA_n(k))\cap\KE_n(R)=\SA_n(R).$
\end{enumerate}
\end{conjecture}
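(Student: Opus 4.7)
My plan is to split the statement into the two easy containments and the two nontrivial lifting problems, and to diagnose the central obstruction that makes both (1) and (2) genuinely conjectural.

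For the trivial directions: given $F\in\SA_n(R)$ with polynomial inverse $G\in\MA_n(R)$, the preceding remark shows that $\pi$ commutes with composition and with $\det\Jac$; hence $\pi(F)\circ\pi(G)=\pi(X)=X$ and $\det\Jac\pi(F)=\pi(1)=1$, giving both $\pi(\SA_n(R))\subseteq\SA_n(k)$ and $\SA_n(R)\subseteq\pi^{-1}(\SA_n(k))\cap\KE_n(R)$. The remaining content of the conjecture is surjectivity in (1) and the reverse inclusion in (2).

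For surjectivity in (1), I would take $g\in\SA_n(k)$, choose an arbitrary coefficient-wise lift $\tilde F\in\MA_n(R)$, and attempt a Hensel/Newton correction of the two defects $\det\Jac\tilde F-1$ and (given any lifted approximate inverse $\tilde G$) $\tilde F\circ\tilde G-X$, both of which have coefficients in $\ker\pi\cdot R^{[n]}$. At each stage one uses that $\Jac(\tilde F)$ has unit determinant mod $\ker\pi$ to solve linear equations for an update $\Delta\in\ker\pi\cdot R^{[n]}$ killing the defect to higher order in powers of $\ker\pi$. The main obstruction is that without $R$ being $\ker\pi$-adically complete this iteration need not terminate in $R$. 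When $R=\Z$, $k=\F_p$, the statement reduces, using $\pi(\STA_n(\Z))=\STA_n(\F_p)$ from the preceding remark, to lifting non-tame automorphisms of $\F_p^{[n]}$, and is therefore essentially as hard as the tame generators question in positive characteristic.

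For (2), given $F\in\KE_n(R)$ with $\pi(F)\in\SA_n(k)$, I would lift an inverse $h$ of $\pi(F)$ arbitrarily to $H\in\MA_n(R)$, observe that $F\circ H-X$ has coefficients in $\ker\pi\cdot R^{[n]}$, and run the analogous Newton iteration: since $\det\Jac F=1$ is a literal unit in $R$, the Jacobian matrix of $F$ is invertible over $R^{[n]}$, so at each stage one solves a linear system for a correction $\Delta$ with $F\circ(H+\Delta)-X$ lying one ideal power deeper. The obstacle is again non-formal: the procedure produces a formal power series inverse but we need a polynomial one. A reduction to the Noetherian case, restricting to the finitely generated sub-$\Z$-algebra that carries the coefficients of $F$, converts the question to an a priori degree bound for the inverse, and such a bound over a characteristic zero base is the content of the classical Jacobian Conjecture. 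In both parts, the essential difficulty is thus not the formal Hensel step but passing from formal/inverse-limit solutions to actual polynomial solutions without assuming completeness of $R$, which explains why the statement is posed as a conjecture rather than a theorem.
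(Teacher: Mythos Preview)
The statement you are addressing is explicitly labelled a \emph{conjecture} in the paper, and the paper offers no proof of it. What the paper gives instead is a paragraph of heuristic justification: the conjectures are said to capture whether characteristic $p$ is ``truly different'' from characteristic zero, and it is remarked that, since no parametrized generating set for $\GA_n(k)$ is known, such lifting statements (if true) should be hard to prove.

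Your proposal is not a proof either, and you correctly say so in your final sentence. What you have supplied is a dissection of the statement into its trivial and nontrivial halves, together with an identification of the obstructions. The easy containments $\pi(\SA_n(R))\subseteq\SA_n(k)$ and $\SA_n(R)\subseteq\pi^{-1}(\SA_n(k))\cap\KE_n(R)$ are indeed immediate from the compatibility of $\pi$ with composition and with $\det\Jac$; the paper does not bother to isolate these. Your reduction of surjectivity in (1), for $R=\Z$ and $k=\F_p$, to lifting non-tame automorphisms matches the paper's own remark that nonsurjectivity of $\pi$ would yield non-tame maps. Your observation that (2) is tied to the characteristic-zero Jacobian Conjecture is also borne out later in the paper, where Conjecture~\ref{conj1}(2) is used precisely as the bridge from $\mathcal{JC}(n,p)$ back to $JC(n,0)$.

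In short: there is nothing to compare your argument against, because the paper does not prove the statement. Your analysis of why the two nontrivial inclusions are genuinely open is sound and in fact more explicit than the paper's own discussion; but a Hensel/Newton scheme that you yourself note only converges in a completion is not a proof over a general $\Z$-algebra $R$, and the statement remains a conjecture.
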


A similar conjecture is the following (see also lemma \ref{subset1} and corollary \ref{corsub1}):

\begin{conjecture} \label{conj2} Let $R$ be a $\Z$ algebra and $k$ be any field of characteristic $p$. If we have a surjective ring homomorphism $R\lp k,$ then the map $\pi: \KE_n(R)\lp \MA_n(k)$ has $\SKE_n(k)$ in its image.
\end{conjecture}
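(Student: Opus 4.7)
Given $G \in \SKE_n(k)$ of degree $d$ over a field $k$ of characteristic $p$, and the surjection $\pi: R \to k$ with kernel $\mathfrak{a}$, the plan is to produce $F \in \KE_n(R)$ with $\pi(F) = G$ by a naive coefficient lift followed by a correction inside $\mathfrak{a}$.

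Setup. Using surjectivity of $\pi$, pick any preimage $\tilde F \in \MA_n(R)$ of $G$ (lift each coefficient individually), so $\pi(\tilde F) = G$. Each generator $E_\alpha$ of $I_\Q$ lies in $C_\Z$ (differentiation introduces only integer coefficients) and in $\rad(I_\Q)$, hence $E_\alpha \in J_\Z$. Because $G$ satisfies $J_k = J_\Z \otimes k$, we have $E_\alpha(\nu(G)) = 0$ in $k$, so $E_\alpha(\nu(\tilde F)) \in \mathfrak{a}$ for every $\alpha$. In other words, $\det\Jac(\tilde F) - 1 \in R[x_1,\ldots,x_n]$ has all coefficients in $\mathfrak{a}$, and the problem reduces to finding a perturbation $\delta \in \MA_n(R)$ whose coefficients lie in $\mathfrak{a}$ such that $\det\Jac(\tilde F + \delta) = 1$ in $R$; then $F := \tilde F + \delta$ is the desired Keller lift, since $\pi(F) = \pi(\tilde F) = G$.

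Scheme-theoretic simplification. From $I_\Z \subseteq J_\Z$, the surjection $C_\Z/I_\Z \twoheadrightarrow C_\Z/J_\Z$ realizes $V(J_\Z) \hookrightarrow V(I_\Z)$ as a closed immersion, so it suffices to produce an $R$-point of $V(J_\Z)$ above $\nu(G)$ — any such point is automatically Keller. Moreover, $J_\Z$ is a radical ideal of $C_\Z$ (if $f^n \in J_\Z$ then $f^n \in \rad(I_\Q)$, forcing $f \in \rad(I_\Q) \cap C_\Z = J_\Z$), so $V(J_\Z)$ is a reduced affine $\Z$-scheme. The conjecture has now become a concrete commutative-algebra lifting question: does every $k$-point of the reduced $\Z$-scheme $\spec(C_\Z/J_\Z)$ lift along $\pi$ to an $R$-point?

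Main obstacle. To execute the lift I would run a Hensel-type iteration along the $\mathfrak{a}$-adic filtration: at each stage, the needed correction modulo $\mathfrak{a}^{n+1}$ is a linear system over $k$ whose solvability is controlled by the tangent space $T_{\nu(G)} V(J_\Z)$. The crux is to show that this tangent space is large enough to absorb the error vector produced by the Keller generators $E_\alpha$ — in effect, that the radical $J_\Z$ really captures every first-order obstruction to being Keller, not merely those visible from $\det\Jac = 1$. For $R$ complete in the $\mathfrak{a}$-adic topology with $\mathfrak{a}$ finitely generated this would close off, and for general $R$ one would then invoke Artin or Greenberg approximation to descend the resulting formal solution to a genuine one. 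An alternative route is via Conjecture \ref{conj1}.1: approximate $G$ in a suitable topology by elements of $\SA_n(k)$ and lift each to $\SA_n(R) \subseteq \KE_n(R)$; however, pointwise approximation of a specific $G \in \SKE_n(k)$ over a fixed field is not automatic, and I expect this path to require first replacing $R$ by a completion or henselization before the approximating lifts glue to a lift of $G$ itself.
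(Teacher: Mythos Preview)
The statement you are attempting to prove is labelled a \emph{conjecture} in the paper and is left open there; the authors give no proof, and in fact remark that ``conjecture \ref{conj2}, if true, is very hard to prove.'' So there is no proof in the paper to compare against.

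On the merits of your proposal: the reformulation is sound as far as it goes. Lifting $G$ na\"ively to some $\tilde F$ and observing that the Keller defect $\det\Jac(\tilde F)-1$ has all coefficients in $\mathfrak a$ is the right first move, and recasting the problem as lifting a $k$-point of the reduced affine $\Z$-scheme $\spec(C_{\Z}/J_{\Z})$ to an $R$-point is a legitimate restatement (modulo the implicit assumption $\kar(R)=0$, needed to invoke Lemma~\ref{L3.8} for ``any such point is automatically Keller''; the conjecture as written does not impose this, though every application in the paper does). But this restatement is exactly where the content lies, and you have not discharged it. A Hensel-type iteration along the $\mathfrak a$-adic filtration requires that the linearised system be solvable at every stage, which amounts to formal smoothness of $\spec(C_{\Z}/J_{\Z})$ over $\spec\Z$ at the point $\nu(G)$. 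Reducedness alone does not give this: for a general reduced $\Z$-scheme the lifting property fails already in tiny examples (take $\spec\Z[x]/(x^2-p)$ with $R=\Z_p$ and $k=\F_p$; the unique $\F_p$-point $x=0$ does not lift). Establishing the required smoothness of $V(J_{\Z})$ at every $k$-point would be a major structural statement about the variety of strong Keller maps, comparable in difficulty to the conjecture itself. Your fallback via Artin/Greenberg approximation inherits the same gap and adds another: the conjecture is posed for an arbitrary $\Z$-algebra $R$ with no completeness, excellence, or henselian hypotheses, so no approximation theorem is available.

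In short, you have correctly located the obstruction, but the ``crux'' you name is the entire conjecture, and nothing in the proposal resolves it.
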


If the above conjecture is {\em not} true, then it can mean various things: it could mean that $\mathcal{JC}(k,n)$ is not true (or should be reformulated), or that there exist non-tame automorphisms over $k$.

Assuming  $\mathcal{JC}(k,n)$ to be true, then conjecture \ref{conj1} imples \ref{conj2}, but no other implications can be made, nor does $\mathcal{JC}(k,n)$ imply any of the above conjectures.\\

{\bf Justification of the above conjectures:} The above conjectures are not made to ``match exactly what we need in our proofs''. They capture the essence of whether characteristic $p$ is {\em truly} different from characteristic zero. If one or more of these conjectures is wrong, then characteristic $p$ is in its core different from characteristic zero (for example, there might exist $\F_p$-automorphisms of $\F_p^{[n]}$ which are of a completely different nature than one can find in characteristic zero), while if both of them are correct, then characteristic $p$ is not too unsimilar from characteristic zero and both are intricately linked.

The tendency is to believe the conjectures (hence the name ``conjecture'' and not ``problem'' or ``question''): it would be really surprising if any counterexamples would not be easily constructable in low degree and dimension (and known), whereas it can be easily imagined that the conjectures are true but hard to prove. For example, due to the  the fact that we do not even have a (parametrized) list of generators for the automorphism group $\GA_n(k)$ (unlike for $\GL_n(k), \TA_n(k)$), we can understand that conjecture \ref{conj2}, if true,  is very hard to prove. \footnote{Note, that with a little change,  {\em some people} would agree on the same text for the Jacobian Conjecture. }

\section{Some computations indicating the correctness of conjecture $\mathcal{JC}(k,n)$}

We should check this conjecture for some nontrivial cases, in order to point out that it might do what it claims.
Therefore, in  this subsection we considered  polynomial endomorphisms of degree $\leq3$ with coefficients in field of characteristic $p$, and having affine part identity.
We will check if $\mathcal{JC}(k,2)$ is true for these maps for fields $k$ of characteristic $p$. Let us write down such a polynomial map with generic coefficients:
\[T=(x,y)+(Ax^2+By^2+Cxy+Dx^3+Ey^3+Fx^2y+Gxy^2,\]
\[A_1x^2+B_1y^2+C_1xy+D_1x^3+E_1y^3+F_1x^2y+G_1xy^2).\]
Let us take the determinant of the Jacobian and equal it to 1:
\[1=\det(\Jac(T))=1+(C_1+2A)x+(2B_1+C)y\]\[+(F_1+3D+2AC_1-2A_1C)x^2+(2G_1+2F+4AB_1-4A_1B)xy\]\[+(3E_1+G+2CB_1-2BC_1)y^2\]\[(6AE_1-6A_1E+4B_1F-4BF_1+CG_1-C_1G)xy^2\]\[(6DB_1-6D_1B+4AG_1-4A_1G+FC_1-F_1C)x^2y\]
This gives us generators of the ideal $I_{\Q}=(C_1+2A, 2B_1+C,\ldots)$ in the ring $\Q[A,B,\ldots, E_1]$.
It is clear that the following equations are in $I_{\Q}$ also, by doing some elementary manipulations:
   \[F_1+3D, AC_1-A_1C, G_1+F, AB_1-A_1B, 3E_1+G, CB_1-BC_1, AE_1-A_1E, B_1F-BF_1,\]\[ CG_1-C_1G, DB_1-D_1B, AG_1-A_1G, FC_1-F_1C, DE_1-D_1E,FG_1-F_1G, FA_1-F_1A,\]\[DC_1-D_1C, CE_1-C_1E, B_1G-BG_1, DG_1-GD_1, FE_1-EF_1, DF_1-D_1F\]
\begin{eqnarray}\label{equ}
C_1+2A,C+2B_1,GE_1-EG_1 \in I_{\Q}.\end{eqnarray}
Moreover it can be checked by any computer algebra package (we used singular) that
 \begin{eqnarray}\label{radi}
 A^3{E_1}^2-B^3{D_1}^2,A^3{E}^2-B^3{D}^2\in \rad(I_{\Q}),
 \end{eqnarray}
 where these equations do not belong to $I_{\Q}.$\\

As before, we define $J_{\Z}:=\rad(I_{\Q})\cap \Z[A,B,\ldots, E_1]$, and $J_p:=J_{\Z}\mod p$.
It is possible to now use a computer algebra system to show that
 \ref{equ} and \ref{radi} generate $J_p$, but this can be quite a strain on the computer system, which we can avoid in this case:
We will show that (Part 1) assuming these equations forces $T$ to be invertible for any $p$, and (Part 2) if $T$ is assumed to be invertible, then it satisfies the equations \ref{equ} and \ref{radi} (meaning we show that these equations might not generate $J_p$, but the radical of the ideal generated by them does). \\

{\bf Part 1: assuming the equations yields invertibility.}\\
 We first assume that $A, A_1, E$ are all nonzero.
 Then solving (1) and (2) yields
\[C_1=-2A,C=-\frac{2A^2}{A_1},B_1=\frac{A^2}{A_1},B=\frac{A^3}{{A_1}^2},\]\[G=-\frac{3A_1E}{A},G_1=-\frac{3{A_1}^2E}{A^2},\]\[D=-\frac{{A_1}^3E}{A^3},D_1=\frac{{A_1}^4E}{A^4}\]\[F_1=-3D,G=-3E_1,F=-G_1.\]
Thus
\[T=(x,y)+(Ax^2+\frac{A^3}{{A_1}^2}y^2-\frac{2A^2}{A_1}xy-\frac{{A_1}^3E}{A^3}x^3+Ey^3+\frac{3{A_1}^2E}{A^2}x^2y-\frac{3A_1E}{A}xy^2,\]
\[A_1x^2+\frac{A^2}{A_1}y^2-2Axy-\frac{{A_1}^4E}{A^4}x^3+\frac{A_1E}{A}y^3+\frac{3{A_1}^3E}{A^3}x^2y-\frac{3{A_1}^2E}{A^2}xy^2).\]
This can be rewritten as
\[ T=
\left(
\begin{matrix}
x+A(x-\frac{A}{A_1}y)^2-E\frac{A^3}{A_1^3}(x-\frac{A}{A_1}y)^3, \\
y+  A_1(x^2-\frac{A}{A_1}y)^2-\frac{A_1^4E}{A^4}(x-\frac{A}{A_1}y)^3
\end{matrix} \right)   \]
Regardless of characteristics, $T$ is a tame map of the form
\[ T=(x+\frac{A}{A_1}y, y) (x,y+A_1x^2-\frac{EA_1^3}{A^3}x^3)(x-\frac{A}{A_1}y,y) \]
meaning that $T$ is invertible.\\
In case that  one or more of $A,A_1,E$ are zero are easier than the above case (many coefficients are forced to be zero in these cases) and we leave it to the reader.

{\bf Part 2: assuming invertibility satisfies the equations.}
Since we have a map in dimension 2, it is tame, and we can use the Jung-van der Kulk theorem. Since the degree is three or less, it is a map of the form $\alpha (x, y+f(x)) \beta $ where $\alpha, \beta $ are affine invertible maps, and $\deg(f)\leq 3$. (There can only be one triangular map involved, as the degree is prime.)
We can assume that $\beta=(ax+by+c, y)$ as we can put anything occuring in the second component in $f$.
Also, we can assume $f$ is of degree 2 or 3, and also we can assume that $f(0)=0$ as we can put any constant added in $\alpha$.
Adding in the requirement that the affine part of $\alpha (x, y+f(x)) \beta $ must be the identity yields requirements on $\alpha$ given $\beta$ and $f(x)$.
Working this out yields a generic map that is actually very similar to the formula of $T$ above; it can be easily checked that it satisfies the equations. \\

{\bf Remark:}
It is very hard to check this conjecture for specific degrees, even in $n=2$, as there is no shortcut other than doing hard-core computations. In
fact, en passant one is proving the conjecture, and the computations are very similar to proving the Jacobian Conjecture in characteristic zero - which is (we hope the reader agrees) a difficult task\ldots

Of course, we also checked the conjecture for many specific examples (which we do not list here, though we specifically mention that we could rule out ``obvious'' examples like $(x+x^p,y)$), though it feels a bit like touching a wall at random spots in the dark, not finding a light button and then shouting ``this wall has no light button''.

\section{Implications of the Jacobian Conjecture among various fields}

In this section we will see if $k,\acute{k}$ are two arbitrary fields of characteristics $p$ then what is the connection between $\mathcal{JC}(k,n)$ and $\mathcal{JC}(\acute{k},n)$ for all $n\geq1.$ We denote the Jacobian conjecture over all domains
having
 characteristic zero by $JC(n,0)$ and the Jacobian conjecture over all fields with characteristic $p$ by $\mathcal{JC}(n,p).$
In characteristics zero we have the following theorem (theorem 1.1.18 in \cite{E00}).
\begin{theorem}\label{char0}
 Let $R,\acute{R}$ be commutative rings contained in a $\Q$-algebra. If $JC(R,n)$ is true for all $n\geq1,$ then $JC(\acute{R},n)$ is true for all $n\geq1$.
\end{theorem}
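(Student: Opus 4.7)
The plan is a reduction to a finitely generated situation, followed by an appeal to the hypothesis via a suitable embedding, and finally a descent argument. Given $F \in \MA_n(\acute{R})$ Keller, I would first pass to the $\Q$-subalgebra $A \subseteq \acute{R}$ generated by the (finitely many) coefficients of $F$ together with $(\det\Jac(F))^{-1}$. Then $A$ is a finitely generated $\Q$-algebra, $F \in \MA_n(A)$ is still Keller, and because a polynomial inverse (when it exists) is uniquely determined by $F$, showing $F \in \GA_n(A)$ automatically yields $F \in \GA_n(\acute{R})$. So it suffices to handle the case where $\acute{R}$ is a finitely generated $\Q$-algebra.

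Having reduced to the finitely generated case, I would invoke the hypothesis $JC(R,n)$ through a comparison of residue fields. Since $A$ is Jacobson, each residue field $A/\m$ at a maximal ideal $\m$ is algebraic over $\Q$; the common containment hypothesis (both $R$ and $\acute{R}$ sit inside $\Q$-algebras) allows us to embed $A/\m$ and $R$ into a common $\Q$-algebra $S$. Applying $JC(R,n)$ after base change to $S$ (the Keller property persists under any $\Q$-algebra homomorphism) gives that $F \bmod \m$ becomes invertible over $S$, and uniqueness of the inverse forces its coefficients back into $A/\m$. Thus $F$ is fibrewise invertible at every closed point of $\spec(A)$.

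The main obstacle will be upgrading this pointwise invertibility to invertibility of $F$ over $A$ itself. The strategy is \'etale descent: the Keller condition makes the endomorphism $A[x_1,\ldots,x_n] \to A[x_1,\ldots,x_n]$ induced by $F$ \'etale, and an \'etale endomorphism of affine space over a Jacobson ring which is an isomorphism on every geometric closed fibre is an isomorphism (by Zariski's Main Theorem or a faithfully flat descent argument). Once $F \in \GA_n(A)$ is secured, the uniqueness of the polynomial inverse transports invertibility back to $\acute{R}$. The two points that need the most care are (i) arranging the embedding in the middle step so that the Keller condition survives base change to $S$, and (ii) verifying that the \'etale descent in the last step needs nothing beyond what the Keller property already provides; this is where I expect the technical weight of the proof to sit.
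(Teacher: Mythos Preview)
Your middle step contains a genuine gap. You write: ``Applying $JC(R,n)$ after base change to $S$ \ldots\ gives that $F \bmod \m$ becomes invertible over $S$.'' But $JC(R,n)$ is a statement about polynomial maps \emph{with coefficients in $R$}; the map $F\bmod\m$, viewed over $S$, has coefficients in $A/\m\subseteq S$, not in $R$. Embedding $R$ and $A/\m$ into a common $\Q$-algebra $S$ does not let you invoke $JC(R,n)$ on that map, and asserting that $JC(R,n)$ implies $JC(S,n)$ for an overring $S\supseteq R$ is precisely (a special case of) the theorem you are trying to prove. So the reduction to closed fibres never actually touches the hypothesis; the argument is circular at this point.

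The proof the paper cites (Theorem 1.1.18 in \cite{E00}) supplies the idea you are missing: \emph{restriction of scalars} (Weil restriction). If $k_1\subset k$ is a finite extension with $k_1$-basis $a_1,\ldots,a_m$, then the linear isomorphism $\alpha:k_1^m\to k$, $(y_1,\ldots,y_m)\mapsto\sum y_ia_i$, conjugates a polynomial map $F:k^n\to k^n$ into a polynomial map $F^{\alpha}:=\alpha^{-1}F\alpha:k_1^{mn}\to k_1^{mn}$. The key computation (equation 1.1.26 in \cite{E00}, which the paper explicitly singles out when discussing the characteristic-zero proof of this theorem) is that $\det\Jac(F)\in k^*$ if and only if $\det\Jac(F^{\alpha})\in k_1^*$, and clearly $F$ is invertible iff $F^{\alpha}$ is. This is how one transports the hypothesis from $R$ to a map defined over a different ring: one does \emph{not} embed into a larger ring, one restricts scalars down to a smaller one at the cost of raising the dimension from $n$ to $mn$. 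That dimension jump is exactly why the statement assumes $JC(R,n)$ for \emph{all} $n\geq 1$; your outline never uses that quantifier, which is already a sign that the hypothesis is not being engaged. The \'etale descent you propose in step~(iii) may be made to work once the fibrewise invertibility is genuinely established, but as written the proof never gets that far.
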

  For the characteristic $p$ equivalent we have to assume part of our conjectures:

\begin{theorem}\label{charp} Assume the conjectures \ref{conj1}(2), \ref{conj2} are true. Let $k,\acute{k}$ be two fields contained in an $\F_p$-algebra such that $\mathcal{JC}(k,n)$ is true for all $n\geq1,$ then $\mathcal{JC}(\acute{k},n)$ is true for all $n\geq 1$. In particular, it is enough to verify $\mathcal{JC}(\F_p,n)$.
\end{theorem}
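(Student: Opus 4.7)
The plan is a lift-and-specialize argument: lift $F \in \SKE_n(\acute k)$ to a Keller map over a characteristic-zero polynomial ring $R$ that surjects onto both $\acute k$ and $k$, use $\mathcal{JC}(k,n)$ on the specialization to $k$, and then push the resulting $R$-automorphism back down to $\acute k$.

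First I would fix $F \in \SKE_n(\acute k)$ and take $R = \Z[Y_i : i \in I]$, with the index set $I$ large enough to support two independent surjective $\Z$-algebra homomorphisms $\pi : R \twoheadrightarrow \acute k$ and $\pi' : R \twoheadrightarrow k$ (for instance by sending two disjoint blocks of the $Y_i$ to sets of $\F_p$-generators of $\acute k$ respectively $k$, and the remaining variables to $0$). Crucially $R$ is a $\Z$-algebra of characteristic zero.

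By Conjecture \ref{conj2} applied to $\pi$, there exists $\tilde F \in \KE_n(R)$ with $\pi(\tilde F) = F$. Because $\kar(R) = 0$, Lemma \ref{L3.8} upgrades this to $\tilde F \in \SKE_n(R)$, so $\tilde F$ satisfies every element of $J_R$. Now $J_R = J_{\Z} \otimes R$ and $J_k = J_{\Z} \otimes k$, and surjectivity of $\pi'$ means every generator of $J_k$ is the image under $\pi'$ of a generator of $J_R$; hence $F' := \pi'(\tilde F)$ satisfies $J_k$, i.e.\ $F' \in \SKE_n(k)$. The hypothesis $\mathcal{JC}(k,n)$ then gives $F' \in \GA_n(k)$, and since $\det\Jac F' = 1$ in fact $F' \in \SA_n(k)$. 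Conjecture \ref{conj1}(2) applied to $\pi'$ says ${\pi'}^{-1}(\SA_n(k)) \cap \KE_n(R) = \SA_n(R)$, and our $\tilde F$ lies in the left-hand side, so $\tilde F \in \SA_n(R) \subset \GA_n(R)$. Polynomial automorphisms are preserved by ring homomorphisms, hence $F = \pi(\tilde F) \in \GA_n(\acute k)$. For the ``in particular'' clause, take $k = \F_p$: every characteristic-$p$ field contains $\F_p$, so the ``contained in an $\F_p$-algebra'' hypothesis is automatic.

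Modulo the two assumed conjectures, the main subtlety I would expect is the transport of the strong-Keller condition along the two specializations: this rests on Lemma \ref{L3.8} (to promote the lift's Keller status to strong Keller in characteristic zero) together with the $J_{\Z}\otimes$-description of $J_R$ and $J_k$. If Lemma \ref{L3.8} were unavailable one would be stuck with only $\det\Jac\tilde F = 1$ over $R$, which transports too little information down to $k$ to invoke $\mathcal{JC}(k,n)$; it is precisely the extra radical equations of $J_{\Z}$, made available by passing through a characteristic-zero auxiliary ring, that make the argument go through.
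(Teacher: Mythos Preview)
Your argument is correct, but it takes a different and in some sense cleaner route than the paper. The paper deduces Theorem~\ref{charp} as a corollary of Proposition~\ref{lemma2}: first it shows that $\mathcal{JC}(k,n)$ for one field $k$ (together with the two conjectures) forces the characteristic-zero Jacobian Conjecture $JC(n,0)$ via the specific ring $\Lambda_k$ and Theorem~\ref{char0}, and then it uses $JC(n,0)$ to deduce $\mathcal{JC}(\acute{k},n)$ by lifting through $\Lambda_{\acute{k}}$. So the paper's path is $\mathcal{JC}(k,n)\Rightarrow JC(n,0)\Rightarrow \mathcal{JC}(\acute{k},n)$, and in particular it passes through the universal char-zero statement and needs Theorem~\ref{char0} (hence the ``for all $n$'' quantifier is genuinely used).

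Your approach short-circuits this detour: you build a \emph{single} characteristic-zero polynomial ring $R$ surjecting onto both $k$ and $\acute{k}$, lift once, specialize once, and use Conjecture~\ref{conj1}(2) to pull invertibility back up to $R$ before pushing it down to $\acute{k}$. This avoids invoking Theorem~\ref{char0} entirely and has the pleasant side effect that your argument works for each fixed $n$ separately: you actually prove ``$\mathcal{JC}(k,n)\Rightarrow\mathcal{JC}(\acute{k},n)$'' dimension by dimension, which is marginally sharper than the statement as written. The step where you pass from $\tilde F\in\KE_n(R)$ to $\pi'(\tilde F)\in\SKE_n(k)$ is exactly the content of Lemma~\ref{subset1} in the paper (your invocation of Lemma~\ref{L3.8} plus the $J_{\Z}\otimes$-argument reproves it), so nothing is being smuggled in. What the paper's longer route buys is the intermediate equivalence with $JC(n,0)$, which is of independent interest; what yours buys is directness and the per-$n$ refinement.
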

It is very hard to prove the statement \ref{charp} without making any assumption.
 To mention the main hurdle: consider $k$ is infinite field and $k_1\subset k$ a finite Galois extension, $a_1,a_2,\dots,a_n$ a $k_1$ basis of $k$ and denote by $\alpha:k_1^m\rightarrow k$ the map defined by $\alpha(y_1,\dots,y_n)=y_1a_1+\dots+y_na_n.$ The obvious extension $(\alpha,\dots,\alpha):(k_1^m)^n\lp k^n$ which we also denote by $\alpha,$ is clearly bijective. Let $F=(F_1,\dots,F_n):k^n\lp k^n$ be a polynomial map. So conjugate $F$ with $\alpha$ we get the map $F^{\alpha}:=\alpha^{-1}F\alpha:k_1^{mn}\lp k_1^{mn}.$ Comparing to the characteristics zero proof of theorem \ref{char0} there we know that $\det\Jac(F)\in k^*$ if and only if $\det\Jac(F^{\alpha})\in k_1^*$ (equation 1.1.26 in \cite{E00}). In characteristics $p$ we should have a similar statement that $F$ satisfies $J_{k}$ if and only if $F^{\alpha}$ satisfies $J_{k_1},$ but the proof of this is very difficult. This property that  $F$ satisfies $J_{k}$ if and only if $F^{\alpha}$ satisfies $J_{k_1}$ is needed to prove the theorem \ref{charp} if we don't assume that conjectures \ref{conj1}(2) and \ref{conj2} are true.
The remaining part of this section is devoted towards the proof of theorem \ref{charp}. We begin with some definitions and lemmas.

Let $\Omega:=\text{algebraic closure of }\F_p(\{x_i| i\in\N\})$ then $\Omega$ is a field with infinite transcendence degree over $\F_p.$
\begin{definition}\label{def}
Let $R,S$ be commutative rings.
Let $\phi:R\rightarrow S$ a ring homomorphism. If $F\in R[X]^n,$ then $F^{\phi}$ denotes the element of $S[X]^n$ obtained by applying $\phi$ to the coefficients of the $F_i$.
\end{definition}
We use the notation $X=(x_1,x_2,\ldots,x_n).$
The following proposition is taken from \cite{E00} (proposition 1.1.7 in \cite{E00}). $\eta$ is the nilradical of $R$.
\begin{proposition}(\textbf{Invertibility under base change})\label{invbc}
Let $\phi:R\rightarrow S$ be a ring homomorphism with $\ker\phi\subset\eta.$ Let $F\in R[X]^n$ with $\det JF(0)\in R^*.$ Then $F$ is invertible if and only if $F^\phi$ is invertible over $S.$
\end{proposition}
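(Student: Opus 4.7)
The $(\Rightarrow)$ direction is immediate: applying $\phi$ coefficient-wise to a polynomial inverse of $F$ yields a polynomial inverse of $F^\phi$. For $(\Leftarrow)$, fix a polynomial inverse $G' \in S[X]^n$ of $F^\phi$. After replacing $F$ by $F - F(0)$ (which does not affect invertibility, amounting to composition on the left with the affine automorphism $X \mapsto X + F(0)$), I may assume $F(0) = 0$. Since $\det JF(0) \in R^*$, the equation $F(G(Y)) = Y$ has a unique formal solution $G \in R[[X]]^n$ with $G(0) = 0$, obtained by solving degree by degree; the goal is to show this $G$ is actually a polynomial.

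\textbf{Noetherian reduction.} Let $R_0 \subseteq R$ be the $\Z$-subalgebra generated by the coefficients of $F$ together with $(\det JF(0))^{-1}$, and let $S_0 \subseteq S$ be the $\Z$-subalgebra generated by $\phi(R_0)$ and the coefficients of $G'$. Both are finitely generated $\Z$-algebras, hence Noetherian, so the nilradical $\eta(R_0)$ is a nilpotent ideal. Since an element of $R_0$ nilpotent in $R$ is nilpotent in $R_0$, the restricted map $\phi_0 := \phi|_{R_0} : R_0 \to S_0$ has $\ker \phi_0 \subseteq \eta(R_0)$, and so $N := \ker \phi_0$ is itself a nilpotent ideal, say $N^m = 0$. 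A polynomial inverse of $F$ over $R_0$ is automatically one over $R$, so I work inside $R_0$.

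\textbf{Induction on the nilpotency index $m$.} For $m = 1$, $\phi_0$ is injective; the formal inverse $G \in R_0[[X]]^n$ and the polynomial $G' \in S_0[X]^n$ are both formal inverses of $F$ over $S_0$, and uniqueness of formal inverses forces $G = G'$, giving $G \in R_0[X]^n$. For $m \geq 2$, apply the inductive hypothesis to the induced map $R_0/N^{m-1} \to S_0$ (kernel $N/N^{m-1}$ has nilpotency index $\leq m-1$) to obtain a polynomial inverse $\bar{G}_1$ of $F \bmod N^{m-1}$. Lift coefficient-wise to $G_1 \in R_0[X]^n$ with $G_1(0) = 0$. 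Then $G_1(F) = X + H_0$ for some $H_0 \in R_0[X]^n$ all of whose coefficients lie in $N^{m-1}$ and with $H_0(0) = 0$. Set $G_2 := G_1 - H_0(G_1) \in R_0[X]^n$. Then
\[
G_2(F) = G_1(F) - H_0(G_1(F)) = (X + H_0) - H_0(X + H_0).
\]
Since every coefficient of $H_0$ lies in $N^{m-1}$ and $(N^{m-1})^2 \subseteq N^m = 0$, every term in the expansion of $H_0(X+H_0)$ involving a factor from $H_0$ inside the argument vanishes, so $H_0(X+H_0) = H_0(X) = H_0$, whence $G_2(F) = X$. Left inverses of $F$ in $R_0[[X]]^n$ are unique (composing on the right with the formal two-sided inverse $G$), so $G_2 = G$, and therefore $G \in R_0[X]^n \subseteq R[X]^n$. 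Composing on either side with the right-inverse property of $G$ gives $F \circ G = X$, so $G$ is a genuine polynomial inverse of $F$.

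\textbf{Main obstacle.} The entire technical content is concentrated in the lifting identity $H_0(X+H_0) = H_0$, which relies on $N$ being a \emph{nilpotent ideal}, not merely a set of nilpotent elements. This is exactly what forces the passage to finitely generated $\Z$-subalgebras: the hypothesis $\ker\phi \subseteq \eta$ only provides elementwise nilpotence, and without Noetherianness one cannot upgrade this to ideal-level nilpotence. The remaining ingredients (existence and uniqueness of the formal inverse, and the easy reduction $F(0) = 0$) are standard.
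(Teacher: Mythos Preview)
The paper does not prove this proposition at all: it is quoted from van den Essen's book (Proposition~1.1.7 in \cite{E00}) and used as a black box. Your argument is correct and is essentially the standard proof one finds there---Noetherian reduction to make $\ker\phi$ a \emph{nilpotent ideal} rather than merely an ideal of nilpotents, followed by induction on the nilpotency index with the correction $G_2 = G_1 - H_0(G_1)$ and the key identity $H_0(X+H_0)=H_0$ coming from $(N^{m-1})^2\subseteq N^m=0$. One cosmetic remark: your final sentence ``Composing on either side with the right-inverse property of $G$ gives $F\circ G = X$'' is redundant, since you have just established $G_2=G$ and $G$ was defined as the two-sided formal inverse, so $F\circ G = X$ is already known.
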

\begin{lemma} (\textbf{Embedding lemma})\label{embd}  Let $\F_p\subset \F_p(a_1,a_2,\dots,a_n)$ be a finitely generated field extension. Then there exists an isomorphism $\phi:\F_p(a_1,a_2,\dots,a_n)\simeq k\subset \Omega,$ where $k$ is a subfield of $\Omega.$
\end{lemma}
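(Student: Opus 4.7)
The plan is to exploit two features of $\Omega$: it contains infinitely many algebraically independent elements over $\F_p$, and it is algebraically closed. Together these will let me embed any finitely generated field extension of $\F_p$ into $\Omega$ in a fairly standard way.

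First I would note that since $\F_p(a_1,\ldots,a_n)$ is finitely generated over $\F_p$, its transcendence degree $r$ is finite with $r\leq n$. Choose a transcendence basis $t_1,\ldots,t_r\in\F_p(a_1,\ldots,a_n)$ over $\F_p$, so that $\F_p(a_1,\ldots,a_n)$ is algebraic, and in fact finite algebraic, over the purely transcendental subfield $\F_p(t_1,\ldots,t_r)$.

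Next I would use that $x_1,\ldots,x_r\in\Omega$ are algebraically independent over $\F_p$ by construction, which yields a well-defined $\F_p$-algebra isomorphism $\F_p(t_1,\ldots,t_r)\simeq\F_p(x_1,\ldots,x_r)\subset\Omega$ sending $t_i\mapsto x_i$. Since $\Omega$ is an algebraically closed field containing $\F_p(x_1,\ldots,x_r)$, the standard extension theorem for embeddings into algebraic closures (applied to the finite algebraic extension $\F_p(a_1,\ldots,a_n)/\F_p(t_1,\ldots,t_r)$) provides an extension to an embedding $\phi:\F_p(a_1,\ldots,a_n)\hookrightarrow\Omega$. Taking $k:=\phi(\F_p(a_1,\ldots,a_n))$ gives the desired subfield of $\Omega$ and the isomorphism $\phi:\F_p(a_1,\ldots,a_n)\simeq k$.

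There is no serious obstacle here; the only point one needs to be careful about is verifying that the transcendence degree of a finitely generated field extension is finite (so that one actually can map a transcendence basis injectively into the countable algebraically independent family $\{x_i\}_{i\in\N}$), and then invoking the standard extension theorem in the correct direction. Both are elementary facts of field theory, so the lemma is essentially a bookkeeping consequence of the fact that $\Omega$ has infinite transcendence degree over $\F_p$ and is algebraically closed.
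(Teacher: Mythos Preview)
Your proof is correct and follows the standard route: pick a transcendence basis, send it to finitely many of the $x_i$, and then extend over the remaining (finite) algebraic part using the extension theorem for embeddings into an algebraically closed field. This is the argument the paper intends, and it rests on the same key input (the extension theorem, quoted in the paper as Lemma~\ref{embd1} from Lang).

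In fact your write-up is more careful than the paper's own proof. The paper asserts that $\F_p(a_1,\ldots,a_n)$ is an algebraic extension of $\F_p$ and applies Lemma~\ref{embd1} directly to the inclusion $\F_p\hookrightarrow\Omega$. That assertion is not part of the hypothesis---the $a_i$ may well be transcendental, and in the application of the lemma immediately afterwards they are arbitrary coefficients of a polynomial map over a field of characteristic $p$---so the paper silently skips the transcendence-basis step that you correctly spelled out. Your version is the complete argument; the paper's should be read as a shorthand for exactly what you wrote.
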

To prove this lemma we will use the following lemma which can be seen in any standard textbook on algebra (theorem 2.8 on page 233 in \cite{L02}).
\begin{lemma}\label{embd1} Let $K/k$ be an algebraic field extensions and let $\phi: k \to C$ be a ring homomorphism where $C$ is an algebraically closed field. Then there exists a ring homomorphism $\sigma : K \to C$ which extends $\phi.$
\end{lemma}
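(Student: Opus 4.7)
The plan is to invoke Zorn's lemma on the poset of partial extensions, then use the algebraic closedness of $C$ to show that any maximal partial extension must already have domain $K$.

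First I would let $\mathcal{S}$ denote the set of pairs $(L,\psi)$ where $L$ is an intermediate field $k\subseteq L\subseteq K$ and $\psi\colon L\to C$ is a ring homomorphism with $\psi|_k=\phi$. I partially order $\mathcal{S}$ by $(L_1,\psi_1)\leq(L_2,\psi_2)$ if and only if $L_1\subseteq L_2$ and $\psi_2|_{L_1}=\psi_1$. The set $\mathcal{S}$ is nonempty since $(k,\phi)\in\mathcal{S}$. Given any totally ordered chain $\{(L_\lambda,\psi_\lambda)\}_{\lambda\in\Lambda}$, the union $\widetilde{L}:=\bigcup_\lambda L_\lambda$ is again a field intermediate between $k$ and $K$, and the maps $\psi_\lambda$ glue unambiguously to a ring homomorphism $\widetilde{\psi}\colon\widetilde{L}\to C$ that restricts to each $\psi_\lambda$; this $(\widetilde{L},\widetilde{\psi})$ is an upper bound for the chain. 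By Zorn's lemma, $\mathcal{S}$ contains a maximal element $(L_0,\psi_0)$.

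Next I would show $L_0=K$, which gives the desired $\sigma:=\psi_0$. Suppose for contradiction that there exists $\alpha\in K\setminus L_0$. Since $K/k$ is algebraic and $k\subseteq L_0$, $\alpha$ is algebraic over $L_0$ with some minimal polynomial $f(X)\in L_0[X]$. Apply $\psi_0$ to the coefficients of $f$ to obtain $f^{\psi_0}(X)\in C[X]$; because $C$ is algebraically closed and $\deg f^{\psi_0}=\deg f\geq 1$, there is a root $\beta\in C$ of $f^{\psi_0}$. The universal property of the quotient $L_0[X]/(f)\cong L_0(\alpha)$ then gives a ring homomorphism $\psi_0'\colon L_0(\alpha)\to C$ sending $\alpha\mapsto\beta$ and agreeing with $\psi_0$ on $L_0$. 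Thus $(L_0(\alpha),\psi_0')\in\mathcal{S}$ strictly dominates $(L_0,\psi_0)$, contradicting maximality. Hence $L_0=K$.

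The only mildly delicate step is the extension-by-one-element argument: one must check that the assignment $\alpha\mapsto\beta$ actually defines a ring homomorphism, which follows formally by factoring $L_0[X]\to C$ (send $X\mapsto\beta$ and use $\psi_0$ on coefficients) through the ideal $(f)$, since $f^{\psi_0}(\beta)=0$ guarantees this factorization exists. Everything else is routine Zorn's lemma bookkeeping, and the overall argument is entirely standard, which is why the authors merely cite \cite{L02}.
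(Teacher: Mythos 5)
Your proof is correct and complete: the Zorn's lemma setup, the gluing over chains, and the one-element extension via a root of $f^{\psi_0}$ in the algebraically closed field $C$ are all sound (note that $\psi_0$, being a homomorphism of fields, is injective, so $\deg f^{\psi_0}=\deg f$ as you claim; alternatively, $f$ is monic). The paper gives no proof of this lemma at all, simply citing \cite{L02}, and your argument is precisely the standard one that citation points to, so there is nothing to compare beyond noting that you have supplied the routine details the authors chose to omit.
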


\begin{proof} (of embedding lemma)
 As $\F_p(a_1,a_2,\dots,a_n)$ is an algebraic extension of $\F_p$ and we have an inclusion $\F_p \to \Omega,$ so it has an extension $\F_p(a_1,a_2,\dots,a_n)\simeq k\subset \Omega$ by lemma \ref{embd1}.
\end{proof}

We can now use this to show that proving the $\mathcal{JC}$ for $\Omega$ is universal, in the sense that it proves the Jacobian Conjecture for all fields of the same characteristic.

\begin{proposition}
Let $n\geq1.$ If $\mathcal{JC}(\Omega,n)$ is true then $\mathcal{JC}(k,n)$ is true for any field $k$ of characteristics $p.$
\end{proposition}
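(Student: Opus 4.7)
Given $F \in \SKE_n(k)$ with $k$ a field of characteristic $p$, the aim is to conclude $F \in \GA_n(k)$. Let $k_0 \subseteq k$ be the subfield generated over $\F_p$ by the (finitely many) nonzero coefficients of $F$, so $k_0 = \F_p(a_1,\ldots,a_m)$ is a finitely generated field extension of $\F_p$. Arguing as in the proof of Lemma \ref{rem1}, any element of $J_{k_0}$ has the form $\sum_i e_i h_i$ with $e_i \in J_{\Z}$ and $h_i \in k_0$, and it evaluates to $0$ at $v(F)$ because $F \in \SKE_n(k)$; hence $F \in \SKE_n(k_0)$.

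Next, apply the Embedding Lemma \ref{embd} to obtain an injection $\phi : k_0 \hookrightarrow \Omega$, and let $F^\phi \in \MA_n(\Omega)$ denote the polynomial map obtained by applying $\phi$ coefficientwise. Because $J_{\Z}$ consists of polynomials with integer coefficients and $\phi$ is a ring homomorphism fixing $\Z$, any identity $\sum_i e_i(v(F)) h_i = 0$ in $k_0$ transforms under $\phi$ into $\sum_i e_i(v(F^\phi)) \phi(h_i) = 0$ in $\phi(k_0)$. Since every generator of $J_{\phi(k_0)} = J_{\Z} \otimes \phi(k_0)$ has the form $e \otimes \phi(h)$, this shows $F^\phi \in \SKE_n(\phi(k_0))$, and Lemma \ref{rem1} promotes it to $F^\phi \in \SKE_n(\Omega)$. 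The hypothesis $\mathcal{JC}(\Omega,n)$ then yields $F^\phi \in \GA_n(\Omega)$.

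To descend, observe that strong Keller maps automatically have affine part equal to the identity, as the universal $F[d]$ is normalized this way in the definition; consequently $\det JF(0) = 1 \in k_0^*$. Applying Proposition \ref{invbc} to the injection $\phi : k_0 \hookrightarrow \Omega$ (whose kernel $0$ is trivially contained in the nilradical of the field $k_0$) gives that $F$ is invertible over $k_0$ if and only if $F^\phi$ is invertible over $\Omega$, hence $F \in \GA_n(k_0) \subseteq \GA_n(k)$. The only delicate point is the base-change of the strong Keller condition in the middle paragraph; it hinges on the fact that $J_{\Z}$ is defined with integer coefficients, so the defining equations are preserved by any ring homomorphism fixing $\Z$. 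Everything else is a direct invocation of the previously established lemmas together with the invertibility-under-base-change proposition.
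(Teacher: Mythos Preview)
Your proof is correct and follows essentially the same route as the paper: restrict to the finitely generated subfield $k_0$ over $\F_p$, apply the Embedding Lemma to get $\phi:k_0\hookrightarrow\Omega$, transport the strong Keller condition along $\phi$, use Lemma~\ref{rem1} and $\mathcal{JC}(\Omega,n)$ to obtain invertibility of $F^\phi$, and then descend via Proposition~\ref{invbc}. You supply more justification than the paper does for the intermediate steps (in particular the transport of $J_R$-satisfaction under $\phi$ and the verification of the hypothesis $\det JF(0)\in k_0^*$ needed for Proposition~\ref{invbc}), but the architecture is identical.
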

\begin{proof}
Let $F\in k[X]^n$ satisfying $J_{\Z}\otimes k.$ Let $k_0$ be the subfield of $k$ generated over $\F_p$ by the coefficients of $F.$ Then $F$ satisfies $J_{\Z}\otimes k_0.$ By \ref{embd} we get an embedding $\phi:k_0\to \Omega.$ Since $F$ satisfies $J_{\Z}\otimes k_0$ we get that $F^{\phi}$ satisfies $J_{\Z}\otimes \phi(k_0)$ and hence by lemma \ref{rem1} $F^{\phi}$ satisfies $J_{\Z}\otimes \Omega.$ Hence $F^{\phi}$ is invertible over $\Omega$ since we assume that $JC(\Omega,n)$ is true.  So by \ref{invbc} $F$ is invertible over $k_0$ and hence over $k.$
\end{proof}
\begin{corollary}\label{countable}{of lemma \ref{rem1}\\} Let $n\geq1$ and $k_0\subset k$ be fields of characteristics $p.$ If $\mathcal{JC}(k,n)$ is true then $\mathcal{JC}(k_0,n)$ is true for any subfield $k_0$ of $k$.
\end{corollary}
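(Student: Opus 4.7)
The plan is a short reduction via Lemma \ref{rem1}. Let $F \in \SKE_n(k_0)$; we wish to conclude $F \in \GA_n(k_0)$. First, since $k_0 \subset k$, Lemma \ref{rem1} promotes $F$ to an element of $\SKE_n(k)$. The hypothesis $\mathcal{JC}(k,n)$ then hands us an inverse $G \in \MA_n(k)$ with $F \circ G = G \circ F = X$. Thus the only nontrivial content is to descend $G$ from $k$ to $k_0$.

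To carry out the descent, I would treat $F$ as a $k_0$-algebra endomorphism of $k_0[X]$ and check bijectivity directly. Injectivity is inherited from injectivity of the extended map $k[X] \to k[X]$. For surjectivity I would exploit faithful flatness of the field extension $k_0 \subset k$: the inclusion of $k_0$-modules $k_0[F_1,\ldots,F_n] \hookrightarrow k_0[X]$ base-changes along $- \otimes_{k_0} k$ to $k[F_1,\ldots,F_n] \hookrightarrow k[X]$, so an element of $k_0[X]$ whose image in $k[X]$ lies in $k[F_1,\ldots,F_n]$ must already lie in $k_0[F_1,\ldots,F_n]$. Applied to each $X_i$ (which sits in $k[F_1,\ldots,F_n]$ by invertibility of $F$ over $k$), this yields $X_i \in k_0[F_1,\ldots,F_n]$ and hence $F \in \GA_n(k_0)$. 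An equivalent and perhaps slicker route is to quote Proposition \ref{invbc} applied to the inclusion $\phi: k_0 \hookrightarrow k$: since $\SKE_n \subset \KE_n$ (the generators of $I_\Q$ already lie in $C_\Z$, so they appear among the equations enforced by $J_\Z$), we have $\det\Jac(F)(0) \in k_0^*$, and Proposition \ref{invbc} directly gives that $F$ is invertible over $k_0$ iff $F^\phi$ is invertible over $k$.

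No step here is a serious obstacle: Lemma \ref{rem1} does the conceptual work of propagating the strong Keller condition upward, and the descent of invertibility is classical commutative algebra (or a citation away). The only point requiring attention is the observation that $J_\Z$ is a universal object independent of the field, so that satisfying $J_{k_0}$ and satisfying $J_k$ are genuinely the same condition on the coefficients of $F$ once we regard those coefficients as elements of the larger field; this is precisely what Lemma \ref{rem1} encodes, and without it the reduction would stall at the very first step.
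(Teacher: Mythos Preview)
Your proposal is correct and matches the paper's proof almost exactly: the paper also applies Lemma~\ref{rem1} to pass from $\SKE_n(k_0)$ to $\SKE_n(k)$, invokes $\mathcal{JC}(k,n)$, and then cites Proposition~\ref{invbc} for the descent of invertibility. Your additional faithful-flatness argument is a valid alternative to the citation, and your remark that $\SKE_n \subset \KE_n$ (needed to verify the hypothesis $\det\Jac(F)(0)\in k_0^*$ of Proposition~\ref{invbc}) is a detail the paper leaves implicit.
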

\begin{proof}
Let $F\in\SKE_n(k_0)$ then by lemma \ref{rem1} we have $F\in\SKE_n(k).$ Since we assume that $JC(k,n)$ is true so $F$ is invertible over $k.$ Hence $F$ is invertible over $k_0$ by proposition \ref{invbc}.
\end{proof}
Let $k$ be any countable field of characteristics p. We can write $k=\{a_1,a_2,\dots\}$ where each $a_i\neq a_j$ for all $i\neq j.$
We can also assume that $k$ is ordered set. Corresponding to each element $a_i$ in $k$  consider the indeterminate $x_i.$ Define a polynomial ring over $\Z$ by $\Lambda_k:=\Z[x_1,x_2,...].$ Define a map by $\tau:\Lambda_k\rightarrow k$ by $x_i\mapsto a_i$ and $m\mapsto m\mod p$ for any $m\in\Z.$ Then it is clearly well defined surjective ring homomorphism.
Thus we have the following definition
 \begin{definition}\label{def1}
 For each countable field $k$ of characteristics $p$, define a polynomial ring $\Lambda_k$ over $\Z$ such that $\tau:\Lambda_k\lp k$ is surjective ring homomorphism.

 Notice that we can naturally extend $\tau$ to a map from  polynomial maps over $\Lambda_k$ to polynomial maps over $k$. We denote this extended map by $\pi$ as in definition \ref{def2}.
  \end{definition}

  Thus we have the following lemma.
\begin{lemma}\label{subset1}
Let $k$ be a countable field of characteristics $p.$ We have $\pi(\KE_n(\Lambda_k))\subseteq \SKE_n(k)$ for every $n\geq1.$
\end{lemma}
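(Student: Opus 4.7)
The plan is to exploit the fact that $\Lambda_k=\Z[x_1,x_2,\ldots]$ is an integral domain of characteristic zero, so that Lemma \ref{L3.8} upgrades ``Keller'' to ``strong Keller'' in the source; once that is done, I simply transport the vanishing of the universal equations forward along $\tau$. Concretely, taking $G\in \KE_n(\Lambda_k)$ and invoking Lemma \ref{L3.8} (using $\kar(\Lambda_k)=0$) immediately yields $G\in \SKE_n(\Lambda_k)$, i.e.\ $v(G)\in \Lambda_k^{N_d}$ satisfies the ideal $J_{\Lambda_k}=J_{\Z}\otimes \Lambda_k$. In particular $f(v(G))=0$ in $\Lambda_k$ for every $f\in J_{\Z}$.

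Next, I apply the ring homomorphism $\tau:\Lambda_k\lp k$ (extended coefficientwise to $\pi$), which sends $v(G)$ to $v(\pi(G))$. Because each $f\in J_{\Z}\subset C_{\Z}$ is a polynomial with \emph{integer} coefficients, evaluation commutes with $\tau$, giving $f(v(\pi(G)))=\tau(f(v(G)))=\tau(0)=0$ in $k$. To conclude $\pi(G)\in \SKE_n(k)$ I still need to check that $v(\pi(G))$ satisfies the whole ideal $J_k=J_{\Z}\otimes k$, not merely the image of $J_{\Z}$; but this is exactly the trick used at the end of the proof of Lemma \ref{L3.8}: any element of $J_k$ has the form $\sum_i e_i r_i$ with $e_i\in J_{\Z}$ and $r_i\in k$, and evaluating at $v(\pi(G))$ gives $\sum_i e_i(v(\pi(G)))\,r_i = \sum_i 0\cdot r_i =0$.

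The substantive step is the very first one, where Lemma \ref{L3.8} converts the hypothesis ``$G$ is Keller'' (a condition on $I_{\Q}$) into the much stronger information that $v(G)$ already satisfies every radical-level equation $f\in J_{\Z}$ inside the characteristic-zero ring $\Lambda_k$; everything after that is a formal chase of definitions. There is no serious obstacle: the only thing that could fail would be an incompatibility between polynomial evaluation and base change, and this is precisely what is excluded by defining $J_{\Z}$ inside $C_{\Z}$ so that all universal equations have integer coefficients and therefore behave well under every ring homomorphism out of a $\Z$-algebra.
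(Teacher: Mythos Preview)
Your proof is correct and follows essentially the same route as the paper's. The only difference is packaging: you invoke Lemma \ref{L3.8} to pass from $\KE_n(\Lambda_k)$ to $\SKE_n(\Lambda_k)$ in one stroke, whereas the paper unwinds that step and cites Lemma \ref{ideals} directly to obtain $e(\nu(F))=0$ for each $e\in J_{\Z}$; after that, both arguments push the vanishing through $\tau$ and extend $k$-linearly to all of $J_k$ in exactly the same way.
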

\begin{proof}
Let $F\in\KE_n(\Lambda_k),$ then $\det \Jac(F)=1$ and so $F$ satisfies $I_{\Q}$. Let $q\in J_k:=\bar{J}_{\Z}\otimes k$ then $q=\sum_{i}\tilde{e_i}h_i$ for $\tilde{e_i}\in \bar{J}_{\Z}$ and $h_i\in k$ for all $i$ (here $\tilde{e}_ih_i=\tilde{e}_i\otimes h_i,$ but we omit the tensor notation). Since $h_i\in k$ there exist $H_i\in \Lambda_k$ such that $\tau(H_i)=h_i.$ We can define a surjective homomorphism $J_{\Lambda_k}:=J_{\Z}\otimes {\Lambda_k}\lp \bar{J}_{\Z}\otimes k\text{ by }a\otimes b\mapsto \tau(a)\otimes \tau(b)$ where $a\in J_{\Z}$ and $b\in {\Lambda_k}.$
Thus there exist $Q\in J_{\Lambda_k}$ defined by $Q=\sum_{i}{e_i}H_i$ such that $q=\sum_{i}\tau(e_i) \tau(H_i)=\sum_{i}\tilde{e_i} h_i$ where $e_i\in J_{\Z}$ such that $\tau(e_i)=\tilde{e_i}$ for all $i$. By lemma \ref{ideals} we have $e_i(\nu(F))=0$ for all $i$ (since $F$ satisfies $I_{\Q}$).
   If we identify $x_i$ with $a_i$ as in definition of $\tau,$
    $\tilde{e_i}(\nu(\pi(F)))=e_i(\nu(F))\mod p=0\mod p$ for all $i.$ Thus $q(\nu(\pi(F)))=\sum_{i}\tilde{e_i}(\nu(\pi(F)))h_i(\nu(\pi(F)))=0\mod p.$ This shows that $\pi(F)$ satisfies $J_{k}.$ Hence $\pi(F)\in \SKE_n(k)$ which proves the lemma.

\end{proof}
Of course, the above lemma slightly reformulates conjecture \ref{conj2}:

\begin{corollary}\label{corsub1}
Assume  conjecture \ref{conj2} is true and $k$ be a countable field of characteristics $p.$ Then $\pi(\KE_n(\Lambda_k))=\SKE_n(k).$
\end{corollary}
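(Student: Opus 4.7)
The plan is quite short: the corollary follows by combining the inclusion already proved in Lemma \ref{subset1} with the reverse inclusion supplied by Conjecture \ref{conj2}.

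First I would note that Lemma \ref{subset1} already gives the inclusion $\pi(\KE_n(\Lambda_k)) \subseteq \SKE_n(k)$, without any assumption on conjectures. So the only work is to establish the reverse inclusion $\SKE_n(k) \subseteq \pi(\KE_n(\Lambda_k))$.

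For this, I would apply Conjecture \ref{conj2} directly. By Definition \ref{def1}, $\Lambda_k = \Z[x_1, x_2, \ldots]$ is a $\Z$-algebra and the map $\tau: \Lambda_k \lp k$ sending $x_i \mapsto a_i$ (and reducing integers mod $p$) is a surjective ring homomorphism to the field $k$ of characteristic $p$. Conjecture \ref{conj2}, applied to $R = \Lambda_k$, asserts exactly that the image of $\pi : \KE_n(\Lambda_k) \lp \MA_n(k)$ contains $\SKE_n(k)$, i.e.\ $\SKE_n(k) \subseteq \pi(\KE_n(\Lambda_k))$.

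Combining the two inclusions yields $\pi(\KE_n(\Lambda_k)) = \SKE_n(k)$, as desired. There is no real obstacle here: the content of the statement is essentially just a rewording of Conjecture \ref{conj2} in the specific setup $(R, \tau) = (\Lambda_k, \tau)$ provided by Definition \ref{def1}, packaged together with Lemma \ref{subset1}. The only thing to be slightly careful about is checking that the surjectivity of $\tau$ (and hence of the induced $\pi$ on polynomial maps) lets us invoke Conjecture \ref{conj2} verbatim, which it does.
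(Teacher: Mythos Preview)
Your proposal is correct and matches the paper's approach: the paper does not even write out a proof, introducing the corollary with ``Of course, the above lemma slightly reformulates conjecture \ref{conj2},'' which is exactly the combination of Lemma \ref{subset1} (one inclusion) and Conjecture \ref{conj2} applied to $R=\Lambda_k$ (the other inclusion) that you spell out.
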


We are now ready to link $JC(n,0)$ to $\mathcal{JC}(n,p)$.

\begin{proposition}\label{lemma1}\label{lemma2}~\\
(1)
Assume conjecture \ref{conj2} is true.
Then
\[ JC(n,0) \forall n\in \N^* \Longrightarrow \mathcal{JC}(n,p)\forall n\in \N^*.\]
(2)
Assume the conjectures \ref{conj1}(2), \ref{conj2} are true.
Then
\[ JC(n,0) \forall n\in \N^* \desda \mathcal{JC}(n,p)\forall n\in \N^*.\]
In fact, it is enough to prove or disprove $JC(n,\Z)$ for all $n$ to prove or disprove  $\mathcal{JC}(k,n)$ for all $n$ and for any field $k$.
\end{proposition}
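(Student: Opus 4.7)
The plan is to establish both implications by using conjecture \ref{conj2} to lift a strong Keller map in characteristic $p$ to a Keller map in characteristic zero, and conversely using conjecture \ref{conj1}(2) to push automorphism information from $\F_p$ back up to $\Z$.

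For direction (1), let $F \in \SKE_n(k)$ with $k$ a field of characteristic $p$. Let $k_0 \subset k$ be the countable subfield generated over $\F_p$ by the coefficients of $F$; then $F \in \SKE_n(k_0)$ as already observed in the proof of lemma \ref{rem1}. Applying corollary \ref{corsub1}, which rests on conjecture \ref{conj2}, there exists $G \in \KE_n(\Lambda_{k_0})$ with $\pi(G) = F$. Since $\Lambda_{k_0} = \Z[x_1, x_2, \ldots]$ is contained in the $\Q$-algebra $\Q[x_1, x_2, \ldots]$, theorem \ref{char0} combined with the hypothesis $JC(n, 0)$ yields $G \in \GA_n(\Lambda_{k_0})$. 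Applying the ring homomorphism $\pi$ to the identities $G \circ G^{-1} = G^{-1} \circ G = \text{id}$ then shows $F = \pi(G) \in \GA_n(k_0) \subset \GA_n(k)$, proving $\mathcal{JC}(k,n)$.

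The forward direction of (2) is exactly (1). For the converse, by theorem \ref{char0} it suffices to establish $JC(\Z, n)$ for each $n$. Let $F \in \MA_n(\Z)$ with $\det\Jac(F) = 1$. Since $F$ satisfies $I_{\Q}$, lemma \ref{ideals} implies that $F$ satisfies $J_{\Z}$, hence $F \mod p$ satisfies $J_p$ and therefore $F \mod p \in \SKE_n(\F_p)$. Assuming $\mathcal{JC}(\F_p, n)$, we obtain $F \mod p \in \GA_n(\F_p)$, and because $\det\Jac(F \mod p) = 1$, in fact $\pi(F) \in \SA_n(\F_p)$. Conjecture \ref{conj1}(2) applied to the reduction $\Z \to \F_p$ then forces $F \in \SA_n(\Z) \subset \GA_n(\Z)$. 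The final "in fact" sentence follows immediately: $JC(n,0) \Leftrightarrow JC(\Z, n)$ for all $n$ by theorem \ref{char0}, while the argument for the converse only uses the case $k = \F_p$, which is a single instance among "all fields $k$", so both the "prove" and the "disprove" halves of the statement transfer.

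The main obstacle is not the chase above, which is routine once conjectures \ref{conj2} and \ref{conj1}(2) are granted, but rather the essential reliance on these unproven conjectures. Conjecture \ref{conj2} supplies the only available mechanism to lift a strong Keller map over $\F_p$ to a Keller map in characteristic zero; without it, there appears to be no way to convert mod-$p$ data into integral data. Conjecture \ref{conj1}(2) plays the dual role of upgrading "invertible mod $p$" to "invertible over $\Z$", which has no evident substitute absent a manageable parametrized description of $\GA_n$.
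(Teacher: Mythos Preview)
Your argument is correct and, for part~(1), essentially identical to the paper's: pass to the countable subfield generated by the coefficients, lift via corollary~\ref{corsub1} to a Keller map over $\Lambda_{k_0}$, invoke $JC(n,0)$ there, and push the inverse down through $\pi$.

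For the converse in~(2) you take a slightly different route than the paper. The paper works with an arbitrary countable field $k$ of characteristic $p$ and the surjection $\Lambda_k \to k$: it takes $F \in \KE_n(\Lambda_k)$, observes $\pi(F)\in\SKE_n(k)$ via lemma~\ref{subset1}, uses $\mathcal{JC}(k,n)$ to get $\pi(F)\in\SA_n(k)$, and then applies conjecture~\ref{conj1}(2) to conclude $F\in\SA_n(\Lambda_k)$, so $JC(\Lambda_k,n)$ holds and theorem~\ref{char0} finishes. You instead specialize immediately to the surjection $\Z \to \F_p$, which is cleaner: you only need the single instance $\mathcal{JC}(\F_p,n)$ of the hypothesis, and you land directly on $JC(\Z,n)$. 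Both arguments rely on exactly the same ingredients (lemma~\ref{subset1} to get $\pi(F)\in\SKE_n$, conjecture~\ref{conj1}(2) to pull invertibility back up, theorem~\ref{char0} to globalize), and neither actually needs conjecture~\ref{conj2} for this direction---the paper cites corollary~\ref{corsub1} but only uses the inclusion of lemma~\ref{subset1}. Your version has the minor advantage of making the ``in fact'' clause about $JC(\Z,n)$ immediate.
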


\begin{proof}
(1)
Consider $K$ be an arbitrary field of characteristics $p$ and $f\in SK_n(K).$ Let $k$ be a subfield of $K$ generated over $\F_p$ by the coefficients of $f.$
 Since $k$ is at most countable, we have a surjective ring homomorphism $\tau:{\Lambda_k}\rightarrow k$ (definition \ref{def1}).
By corollary \ref{corsub1} there exists $F\in\KE_n({\Lambda_k})$ such that $\pi(F)=f$. Thus $F$ is invertible since we assume that $JC(n,0)$ is true, so there exist $G\in\ME_n(\Lambda_k)$ such that $F\circ G=I.$ Applying $\pi$ we have $\pi(F)\circ \pi(G)=\pi(I)=I\mod p.$ Thus $\pi(G)$ is an inverse of  $f=\pi(F).$ This shows that $f$ is invertible over $k$ and hence over $K.$\\
(2)
Let $K$ be an arbitrary field of characteristics $p$ and consider $k\subseteq K$ be a countable field (if K is itself countable then take $k=K$).
By corollary \ref{corsub1} we have $\pi(\KE_n({\Lambda_k}))=\SKE_n(k)$ (where $\Lambda_k$ is defined in \ref{def1}). Let $F\in\KE_n({\Lambda_k})$ then $\pi(F)$ satisfies $J_{k}.$
 Suppose $\mathcal{JC}(K,n)$ is true then $\mathcal{JC}(k,n)$ is true by corollary \ref{countable}.
  Thus $\pi(F)\in \SA_n(k)$ and so $F\in\pi^{-1}(\SA_n(k)).$ By conjecture \ref{conj1}(2) we have $F\in\SA_n({\Lambda_k}).$ By theorem \ref{char0} we have that $JC(n,0)$ is true.
\end{proof}
\begin{proof}(of theorem \ref{charp})\\
This is direct consequence of proposition \ref{lemma2}
\end{proof}

\section{Some results related to $\mathcal{JC}(k,n)$ }
In this section we present some basic results related to our formulation of the  Jacobian conjecture in characteristic $p$.
\subsection{Invertible polynomial maps and $\mathcal{JC}(k,n)$}
In this subsection we will discuss a natural question which can come to mind when studying the previous. If characteristic of $k$ is zero then we know that if $F\in\SA_n(k)$ then $F$ satisfies the keller condition $\det\Jac(F)=1$ (the only condition for Jacobian conjecture $JC(k,n)$). This is due to the fact that the determinant of the Jacobian has the property $\det\Jac(G\circ F)=\det\Jac(F)\cdot(\det\Jac(G)\circ F)$. If characteristic of $k$ is $p$ it is not easy to prove that if $F\in\SKE_n(k)$ then $F$ satisfies $J_k$ (the universal equations).

Nevertheless, assuming conjectures \ref{conj1}(1) and \ref{conj2} we can prove that if $F\in\SA_n(k)$ then it implies that $F\in\SKE_n(k).$
\begin{proposition}
Assume conjectures \ref{conj1}(1) and \ref{conj2} are true and $k$ be a field of characteristics $p.$ If $f\in\SA_n(k)$ then $f\in\SKE_n(k).$
\end{proposition}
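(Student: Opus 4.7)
The plan is to lift $f$ from characteristic $p$ to characteristic zero via the setup of Definition~\ref{def1}, use the fact that in characteristic zero every element of $\SA_n$ is a Keller map, and then invoke the descent Lemma~\ref{subset1} to conclude that the image is strong Keller.

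First, I would reduce to the case that $k$ is countable. Let $k_0\subseteq k$ be the subfield generated over $\F_p$ by the (finitely many) coefficients of $f$ together with those of its inverse $f^{-1}\in\GA_n(k)$. Then $k_0$ is at most countable and $f\in\SA_n(k_0)$. Once the conclusion is established for the countable field $k_0$, Lemma~\ref{rem1} lifts it from $k_0$ to $k$, so there is no loss of generality in assuming $k$ countable from the outset.

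Assume now $k$ is countable. By Definition~\ref{def1} there is a $\Z$-algebra $\Lambda_k$ of characteristic zero together with a surjective ring homomorphism $\tau:\Lambda_k\lp k$, and $\Lambda_k$ comes equipped with the extended map $\pi$ on polynomial endomorphisms. Applying Conjecture~\ref{conj1}(1) to this surjection yields $\pi(\SA_n(\Lambda_k))=\SA_n(k)$, so we may choose a lift $F\in\SA_n(\Lambda_k)$ with $\pi(F)=f$. Since $\Lambda_k$ has characteristic zero, $F$ automatically satisfies $\det\Jac(F)=1$, so $F\in\KE_n(\Lambda_k)$ (in fact $F\in\SKE_n(\Lambda_k)$ by Lemma~\ref{L3.8}, but we only need the Keller property). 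Now Lemma~\ref{subset1} gives $\pi(\KE_n(\Lambda_k))\subseteq\SKE_n(k)$, and hence $f=\pi(F)\in\SKE_n(k)$, as required. If one prefers to bundle the two inclusions, Corollary~\ref{corsub1} (which uses Conjecture~\ref{conj2}) provides the equality $\pi(\KE_n(\Lambda_k))=\SKE_n(k)$ and the conclusion is immediate.

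The proof is short because the substantive work has already been done in the preceding lemmas. The role of Conjecture~\ref{conj1}(1) is essential: without it, there is no reason a given $f\in\SA_n(k)$ should lift to a characteristic-zero automorphism at all, and \emph{that} is the only way we have of certifying the strong Keller equations $J_k$, since $J_k$ is defined by reduction mod $p$ from integer relations coming out of characteristic-zero computations. The role of Conjecture~\ref{conj2} is secondary here; it is invoked so that the argument sits symmetrically inside the equality $\pi(\KE_n(\Lambda_k))=\SKE_n(k)$ of Corollary~\ref{corsub1}. The main conceptual obstacle, if one wanted to avoid the two conjectures, would be exactly the lifting step: there seems to be no direct way, working inside characteristic $p$, to verify that an arbitrary automorphism satisfies the radical equations cut out by $J_\Z$, because those equations are only visible after integrating information from all characteristics simultaneously.
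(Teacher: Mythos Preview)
Your argument is correct and follows essentially the same route as the paper: pass to the countable subfield $k_0$ generated by the coefficients, lift $f$ to $F\in\SA_n(\Lambda_{k_0})$ via Conjecture~\ref{conj1}(1), observe $F\in\KE_n(\Lambda_{k_0})$, push down through $\pi$, and then extend from $k_0$ to $k$ via Lemma~\ref{rem1}. Your observation that Lemma~\ref{subset1} alone already gives the needed inclusion $\pi(\KE_n(\Lambda_{k_0}))\subseteq\SKE_n(k_0)$, so that Conjecture~\ref{conj2} is not strictly required for this direction, is a small sharpening over the paper's proof, which invokes Corollary~\ref{corsub1} (and hence Conjecture~\ref{conj2}) to phrase things as an equality.
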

\begin{proof}
Let $f\in\SA_n(k).$ Consider $k_0\subset k$ generated over $\F_p$ by the coefficients of $f.$  By conjecture \ref{conj1}(1) there exist some $F\in\SA_n(\Lambda_{k_0})$ such that $\pi(F)=f$ and thus $F$ satisfies $\KE_n(\Lambda_{k_0}).$
 Assuming conjecture \ref{conj2} we have $\pi(\KE_n(\Lambda_{k_0}))=\SKE_n(k_0)$ by corollary \ref{corsub1}. Thus $f\in \SKE_n(k_0)$ and hence $f\in\SKE_n(k)$ by lemma \ref{rem1}.
\end{proof}
\subsection{Closure property of $\SKE_n(k)$}

The set $\KE_n(R)$ is closed under composition for any ring $R$, and also for $R=k$ a field of characteristic $p$, even though it does not only consist of automorphisms. One would expect that $\SKE_n(\F_p)$ is also closed under composition. However, trying to prove this turns out to be an incredibly difficult task: if $F\in \SKE_n(\F_p)$ then the coefficients of $F$ satisfy certain equations that can be found in $J_p$. If we compose two such maps $F,G\in \SKE_n(\F_p)$, then the coefficients of the resulting map $F\circ G$ (denoted $v(F\circ G)$)  are polynomials in the coefficients of $F$ and $G$, i.e. $v(F\circ G)=P(v(F), v(G))$ for some polynomial map $P$.  To check if $F\circ G$ is in $\SKE_n(\F_p)$ we need to
see if  $v(F\circ G)$ satisfy (the equations in) $J_p$; however, this turned out to be extremely difficult.

Comparing to characteristic zero, there we know a priori due to the ``magical'' equation $\det\Jac(F\circ G) = \det\Jac(G)\cdot (\det\Jac(F)\circ G)$ that $\KE_n(\Z)$ is closed under composition. As a corollary, it gives that ``$v(F)$ satisfies $J_{\Z}$ and $v(G)$ satisfies $J_{\Z}$'' implies ``$v(F\circ G)$ satisfies $J_{\Z}$'', but exactly {\em how} is very complicated.

Nevertheless, making an assumption we can prove that $\SKE_n(\F_p)$ is closed under composition.

\begin{proposition}
Assume conjecture \ref{conj2} is true. Then $\SKE_n(k)$ is closed under composition, where $k$ be any field of characteristics $p$.
\end{proposition}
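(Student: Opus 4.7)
The plan is to lift $F,G \in \SKE_n(k)$ to characteristic zero via conjecture \ref{conj2}, compose there where the chain rule is available, and then push the composition back down. The key idea is that in the polynomial ring $\Lambda_{k_0}$ (which has characteristic zero), Keller maps are closed under composition by the ``magical'' identity $\det\Jac(\tilde F \circ \tilde G) = \det\Jac(\tilde G)\cdot(\det\Jac(\tilde F)\circ \tilde G) = 1$, and this good behaviour transfers down under $\pi$ via lemma \ref{subset1}.

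\textbf{Step 1 (reduction to the countable case).} Let $k_0 \subseteq k$ be the subfield generated over $\F_p$ by the (finitely many) coefficients of $F$ and $G$; then $k_0$ is countable. Since $J_{k_0} = J_{\Z}\otimes k_0$ embeds naturally into $J_k = J_{\Z}\otimes k$, the fact that $F,G$ satisfy $J_k$ implies that they satisfy $J_{k_0}$, so $F,G \in \SKE_n(k_0)$. It suffices to show $F\circ G \in \SKE_n(k_0)$, since by lemma \ref{rem1} this will give $F\circ G \in \SKE_n(k)$.

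\textbf{Step 2 (lifting to $\Lambda_{k_0}$).} Invoking conjecture \ref{conj2} in the form of corollary \ref{corsub1}, we have $\pi(\KE_n(\Lambda_{k_0})) = \SKE_n(k_0)$. Therefore we can choose lifts $\tilde F, \tilde G \in \KE_n(\Lambda_{k_0})$ with $\pi(\tilde F) = F$ and $\pi(\tilde G) = G$. (The lifts need not be unique; any choice will do.)

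\textbf{Step 3 (composing in characteristic zero).} Since $\Lambda_{k_0} = \Z[x_1,x_2,\ldots]$ has characteristic zero, the chain-rule identity for the Jacobian determinant applies: $\det\Jac(\tilde F \circ \tilde G) = \det\Jac(\tilde G)\cdot(\det\Jac(\tilde F)\circ \tilde G) = 1\cdot 1 = 1$, so $\tilde F \circ \tilde G \in \KE_n(\Lambda_{k_0})$.

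\textbf{Step 4 (descending).} Because $\pi$ is a ring homomorphism extended coefficient-wise, it commutes with composition of polynomial maps: $\pi(\tilde F \circ \tilde G) = \pi(\tilde F) \circ \pi(\tilde G) = F \circ G$. By lemma \ref{subset1} (which holds unconditionally), $\pi(\KE_n(\Lambda_{k_0})) \subseteq \SKE_n(k_0)$, and thus $F \circ G \in \SKE_n(k_0) \subseteq \SKE_n(k)$, as desired.

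The only genuine input is conjecture \ref{conj2}, used in Step 2 to guarantee that the prescribed lifts $\tilde F, \tilde G$ exist; everything else (the reduction to a countable subfield, the chain rule in characteristic zero, and the unconditional inclusion of lemma \ref{subset1}) is routine. In that sense there is no ``main obstacle'' in the proof itself — the difficulty of the closure-under-composition problem has been entirely absorbed into the hypothesis of conjecture \ref{conj2}, which is precisely the point highlighted in the discussion preceding the statement.
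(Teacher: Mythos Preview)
Your proof is correct and follows essentially the same approach as the paper: reduce to a countable subfield generated by the coefficients, lift to $\KE_n(\Lambda_{k_0})$ via corollary \ref{corsub1} (which uses conjecture \ref{conj2}), compose there, and descend via lemma \ref{subset1} and lemma \ref{rem1}. The only cosmetic difference is that you invoke lemma \ref{subset1} for the final inclusion whereas the paper cites corollary \ref{corsub1} again, but these amount to the same thing.
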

\begin{proof}
Let $f,g\in\SKE_n(k).$ Let $k_1$ be the subfield of $k$ generated over $\F_p$ by the coefficients of $f$ and $g$. Field $k_1$ is countable,
thus by corollary \ref{corsub1} there exist $F,G\in\KE_n(\Lambda_{k_1})$ such that $\pi(F)=f$ and $\pi(G)=g$. Now $F\circ G\in\KE_n(\Lambda_{k_1})$ as $\KE_n(\Lambda_{k_1})$ is closed under composition. Thus by corollary \ref{corsub1} $f\circ g=\pi(F)\circ \pi(G)=\pi(F\circ G)\in\SKE_n(k_1).$ Hence $f\circ g\in\SKE_n(k)$ (lemma \ref{rem1}).
\end{proof}

\subsection{Connections between $\mathcal{JC}(\F_p,n)$ and $JC(\Z,n)$.}

In this subsection we will see how we can move back and forth between $\mathcal{JC}(\F_p,n)$ and $JC(\Z,n).$
We quote theorem 10.3.13 from \cite{E00}. We will need this theorem to build the connection between $\mathcal{JC}(\F_p,n)$ and $JC(\Z,n)$.

\begin{theorem}\label{10.3.13}
Let $F\in\Z[x_1,x_2,\dots,x_n]^n.$ If $F\mod p:{\F_p}^n\rightarrow{\F_p}^n$ is injective for all but finitely many primes $p$ and $\det\Jac(F)\in\Z\setminus\{0\},$ then $F$ is invetible over $\Z.$
\end{theorem}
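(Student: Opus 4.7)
The strategy is to upgrade the mod-$p$ injectivity hypothesis to injectivity of $F$ on $\bar\Q^n$, then invoke Ax's theorem for surjectivity, and finally use the Keller condition $\det\Jac(F)\in\Z\setminus\{0\}$ (a nonzero constant) to extract a polynomial inverse with integer coefficients. Only the first and last steps involve real work; the middle is classical.

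For the upgrade to $\bar\Q^n$, I would argue by contradiction. Suppose $F(a)=F(b)$ for some $a\ne b$ in $\bar\Q^n$. Let $K=\Q(a_1,\ldots,a_n,b_1,\ldots,b_n)$, a finite extension of $\Q$ with ring of integers $\OO_K$; choose a positive integer $N$ with $Na,Nb\in\OO_K^n$ and fix an index $i$ with $a_i\ne b_i$. By the Chebotarev density theorem, infinitely many rational primes $p$ split completely in $K$, so there exists $\mathfrak{p}\subset\OO_K$ above $p$ with residue field $\F_p$. Choosing such a $p$ large enough to (i) lie outside the finitely many exceptional primes of the hypothesis, (ii) be coprime to $N$, and (iii) not divide $N_{K/\Q}(N(a_i-b_i))$, reduction at $\mathfrak{p}$ yields $\bar a\ne\bar b$ in $\F_p^n$ with $F(\bar a)=F(\bar b)$, contradicting injectivity of $F\bmod p$ on $\F_p^n$. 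Granted $\bar\Q^n$-injectivity, Ax's theorem supplies surjectivity; the nonvanishing of $\det\Jac(F)$ makes $F$ \'etale, and an \'etale bijection of affine space is an isomorphism of varieties, so the inverse $G$ is a polynomial map over $\bar\Q$. Galois descent applied to $F\circ G=X$ with $F\in\Q[X]^n$ then yields $G\in\Q[X]^n$.

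The final hurdle, and what I expect to be the main obstacle, is the passage from $G\in\Q[X]^n$ to $G\in\Z[X]^n$. If $d\in\Z_{>0}$ is a common denominator of $G$, then for every prime $p\nmid d$ outside the exceptional set the reduction $G\bmod p$ is forced to be the unique polynomial inverse of $F\bmod p$ in $\F_p[X]^n$ (uniqueness following from bijectivity on the finite set $\F_p^n$ combined with the invertibility of $\Jac(F)$ mod $p$). To forbid any nontrivial denominator I would reformulate $G$ via a Cramer-type expression in the coefficients of $F$ coming from the formal inverse function theorem applied to the \'etale map $F$, whose denominator is controlled by a power of $\det\Jac(F)$; this should give $d\mid\det\Jac(F)^N$ for some $N$. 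Ruling out the remaining primes --- those dividing $\det\Jac(F)$ --- is the most delicate point, since they are precisely the primes where the mod-$p$ Jacobian degenerates; here one must leverage the mod-$p$ bijectivity hypothesis at those primes directly, which is exactly the local-to-global arithmetic step where the hardest work will lie.
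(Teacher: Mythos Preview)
The paper does not prove this result: it is quoted from \cite{E00} (Theorem 10.3.13) and used as a black box, so there is no in-paper argument to compare against. Your route to invertibility over $\Q$ is sound and is essentially the standard one: a hypothetical collision $F(a)=F(b)$, $a\ne b$, in $\bar\Q^n$ reduces modulo a completely split prime (infinitely many exist, so Chebotarev is more than needed) to contradict the mod-$p$ injectivity; an injective polynomial self-map of $\bar\Q^n$ is already an automorphism (Bia\l ynicki-Birula--Rosenlicht, so your detour through Ax plus ``\'etale bijection'' is permissible but not required), and Galois descent gives $G\in\Q[X]^n$.

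The ``main obstacle'' you identify---forcing $G\in\Z[X]^n$---is a red herring caused by an over-literal reading of the conclusion. With only $\det\Jac(F)\in\Z\setminus\{0\}$, the conclusion ``invertible over $\Z$'' is simply false: take $F=(2x_1,x_2,\dots,x_n)$, which has $\det\Jac(F)=2$ and is bijective on $\F_p^n$ for every odd $p$, yet its inverse $(\tfrac12 x_1,x_2,\dots,x_n)$ is not integral. The theorem in \cite{E00} concludes only that $F$ is invertible (over $\C$, equivalently over $\Q$). In the paper's sole application one has $\det\Jac(F)=1$, and then the passage to $\Z$ is immediate and does not need your Cramer-type estimates: the linear part of $F$ lies in $\GL_n(\Z)$, the formal-power-series inverse is produced by a recursion that divides only by that linear part and hence has integer coefficients, and since this formal inverse coincides with the polynomial $G\in\Q[X]^n$ one gets $G\in\Z[X]^n$. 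So once you aim for $\GA_n(\Q)$ rather than $\GA_n(\Z)$, your argument is complete.
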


\begin{lemma}
If $\mathcal{JC}(\F_p,n)$ is true for all but finitely many primes $p$, then $JC(\Z,n)$ is true.
\end{lemma}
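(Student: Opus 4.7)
The plan is to reduce the statement to Theorem \ref{10.3.13}: it suffices to prove that any $F\in\MA_n(\Z)$ with $\det\Jac(F)\in\Z^{*}$ has the property that $F\bmod p:\F_{p}^{n}\to\F_{p}^{n}$ is injective for all but finitely many primes $p$ (the nonvanishing of $\det\Jac(F)$ is then automatic, since $\det\Jac(F)=\pm 1\in\Z\setminus\{0\}$). So the entire task is to establish injectivity of the reductions at all but finitely many primes.

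To get this, first I would promote $F$ from Keller to strong Keller over $\Z$. Since $\det\Jac(F)\in\Z^{*}$, the map $F$ lies in $\KE_n(\Z)$ by definition, and because $\kar(\Z)=0$, Lemma \ref{L3.8} gives $F\in\SKE_n(\Z)$. Concretely, the coefficient vector $v(F)\in\Z^{N_{d}}$ satisfies $e(v(F))=0$ in $\Z$ for every $e\in J_{\Z}$. Reducing modulo any prime $p$, I obtain $e(v(F))\equiv 0\pmod{p}$, and any element of $J_{p}=J_{\Z}\otimes\F_{p}$ can be written as $\sum_{i}e_{i}r_{i}$ with $e_{i}\in J_{\Z}$ and $r_{i}\in\F_{p}$, so it too vanishes on $v(F\bmod p)$. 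Therefore $F\bmod p\in\SKE_n(\F_{p})$ for \emph{every} prime $p$.

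Now for all but finitely many primes $p$ the hypothesis $\mathcal{JC}(\F_{p},n)$ holds, and applied to the strong Keller map $F\bmod p$ it yields $F\bmod p\in\GA_n(\F_{p})$. A polynomial automorphism over $\F_{p}$ has a polynomial inverse $G\in\MA_n(\F_{p})$, and evaluating the identities $F\circ G=G\circ F=\mathrm{id}$ at points of $\F_{p}^{n}$ shows that the induced map $F\bmod p:\F_{p}^{n}\to\F_{p}^{n}$ is bijective, in particular injective. Combined with $\det\Jac(F)=\pm 1\neq 0$, Theorem \ref{10.3.13} then forces $F\in\GA_n(\Z)$, which is $JC(\Z,n)$.

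The only subtle step, and hence the one most likely to be called out as the ``main obstacle,'' is ensuring that the reduction $F\bmod p$ inherits the \emph{strong} Keller property and not merely the Keller property $\det\Jac(F\bmod p)\in\F_{p}^{*}$: $\mathcal{JC}(\F_{p},n)$ only applies to strong Keller maps. This is precisely where the characteristic-zero coincidence $\SKE_n(\Z)=\KE_n(\Z)$ supplied by Lemma \ref{L3.8} is indispensable, since it is what lets us upgrade the bare Jacobian hypothesis over $\Z$ into a hypothesis that survives reduction to each $\F_{p}$.
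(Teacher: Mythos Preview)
Your proof is correct and follows essentially the same route as the paper: reduce $F$ modulo $p$, show the reduction is a strong Keller map, invoke $\mathcal{JC}(\F_p,n)$ to get invertibility for almost all $p$, and then apply Theorem \ref{10.3.13}. The only cosmetic difference is that the paper cites Lemma \ref{subset1} (with $\Lambda_{\F_p}\supseteq\Z$) for the step ``$F\in\KE_n(\Z)\Rightarrow F\bmod p\in\SKE_n(\F_p)$'', whereas you unpack this via Lemma \ref{L3.8} and a direct reduction-mod-$p$ argument on the elements of $J_\Z$; the content is the same.
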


This is a slight variation on proposition \ref{lemma1} but without any requirements.

\begin{proof}
Let $F\in\MA_n(\Z),$ such that $\det(\Jac(F))=1.$ Then by lemma \ref{subset1} $F\mod p$ satisfies $J_p.$ Thus $F\mod p$ is invertible for almost all $p$ by given assumptions. By theorem \ref{10.3.13} we conclude that $F$ is invertible.
\end{proof}
For the converse of this lemma we need to assume conjecture \ref{conj2} to be true.
This in turn resembles proposition \ref{lemma2}.

\begin{lemma}
Suppose conjecture \ref{conj2} and $JC(\Z,n)$ are true, then $\mathcal{JC}(\F_p,n)$ is true.
\end{lemma}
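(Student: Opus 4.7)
The plan is to invert $f \in \SKE_n(\F_p)$ by lifting it to an honest Keller map over a characteristic-zero $\Z$-algebra, applying $JC$ there, and pushing the inverse back down through $\pi$. This is essentially the characteristic-$p$ analogue of the direction proved in proposition \ref{lemma2}(1), specialized to the coefficient field $\F_p$.

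First I would take an arbitrary $f \in \SKE_n(\F_p)$. Since $\F_p$ is countable, definition \ref{def1} supplies the surjection $\tau : \Lambda_{\F_p} \lp \F_p$ with $\Lambda_{\F_p} = \Z[x_1, x_2, \ldots]$ a polynomial ring over $\Z$. Conjecture \ref{conj2}, via corollary \ref{corsub1}, then gives $\pi(\KE_n(\Lambda_{\F_p})) = \SKE_n(\F_p)$, so one can pick $F \in \KE_n(\Lambda_{\F_p})$ with $\pi(F) = f$; this $F$ lives in a characteristic-zero polynomial ring and satisfies $\det\Jac(F) = 1$ literally.

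Next, since $\Lambda_{\F_p}$ embeds into the $\Q$-algebra $\Q[x_1, x_2, \ldots]$, theorem \ref{char0} applied to the pair $(\Z, \Lambda_{\F_p})$ upgrades the assumption $JC(\Z, n)$ to $JC(\Lambda_{\F_p}, n)$. Hence $F \in \GA_n(\Lambda_{\F_p})$, producing some $G \in \MA_n(\Lambda_{\F_p})$ with $F \circ G = I$. Applying $\pi$ to this identity yields $f \circ \pi(G) = \pi(F)\circ \pi(G) = \pi(I) = I$ in $\MA_n(\F_p)$, so $f \in \GA_n(\F_p)$, which is the conclusion of $\mathcal{JC}(\F_p, n)$.

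The substantive content is entirely in the lifting step: conjecture \ref{conj2} is doing the heavy work by bridging the ``universal radical'' description of $\SKE_n(\F_p)$ with the concrete $\det\Jac = 1$ description in characteristic zero, and once the lift is available the rest is a straightforward functorial chase via $\pi$. The one subtlety I would flag is the reading of the hypothesis: theorem \ref{char0} is stated for all $n \geq 1$ simultaneously, and the ring-theoretic reduction it encodes may enlarge the number of variables, so the argument in fact uses the full collection $\{JC(\Z, n)\}_{n \geq 1}$ rather than $JC(\Z, n)$ for the single given $n$. This matches the universal-$n$ convention of proposition \ref{lemma2} and appears to be the intended reading of the statement.
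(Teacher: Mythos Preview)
Your argument is correct but takes a slightly longer route than the paper. The paper observes that for $k=\F_p$ one already has the surjection $\Z\to\F_p$, so conjecture~\ref{conj2} may be applied with $R=\Z$ directly: given $f\in\SKE_n(\F_p)$ there is $F\in\KE_n(\Z)$ with $F\bmod p=f$, and then the hypothesis $JC(\Z,n)$ yields an inverse $G$ over~$\Z$, whose reduction $G\bmod p$ inverts~$f$. Your version instead lifts to $\Lambda_{\F_p}=\Z[x_1,x_2,\ldots]$ via corollary~\ref{corsub1} and then invokes theorem~\ref{char0} to transport $JC(\Z,n)$ to $JC(\Lambda_{\F_p},n)$. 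This works, but (as you yourself note) theorem~\ref{char0} consumes the hypothesis $JC(\Z,m)$ for all $m\geq 1$, whereas the paper's direct lift to~$\Z$ uses only the single~$n$ appearing in the statement. So the paper's approach is both shorter and strictly stronger in its conclusion-to-hypothesis ratio; your detour through $\Lambda_{\F_p}$ is unnecessary precisely because $\F_p$ already admits $\Z$ itself as a characteristic-zero cover.
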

\begin{proof}
Let $f\in\ME_n(\F_p)$ such that $f$ satisfies $J_p.$
By corollary \ref{corsub1} there exists $F\in\KE_n(\Z)$ such that $F\mod p=f$. By assumption, $F$ is invertible, so there exist $G\in\ME_n(\Z)$ such that $F\circ G=I.$ Thus $(F\mod p)\circ (G\mod p)=I\mod p$ and hence $G\mod p:=g$ is the inverse of  $f$.
\end{proof}

\subsection{Boundedness}

In this subsection we explore what happens if we assume that if the degree, or the degree and the  coefficents,  of a polynomial map is small with respect to $p$.
In some sense, the results say that if $p$ is ``small'' with respect to some formula depending on $p$ and $n$, then the situation is exactly the same as in characteristic zero.
We fix $n$ in this section, but note that the constant $N_d$ below depends also on $n$.

Let $\ME_n{({\F}_p)}^d$ be the set of polynomial endomorphisms of degree at most $d.$ Similarly we can define $\KE_n(\F_p)^d,$ $\SKE_n(\F_p)^d$ etc.
\begin{lemma}\label{local}
 Let $F\in ME_n{({\F}_p)}^d$ and $I_{Q}^d=(E_1,\dots,E_m)$ then there exist a positive integer $N_{d}$ such that for $p>N_d$ we have $J_{\Z_{(p)}}^d=\rad(E_1,\dots,E_m).$
 \end{lemma}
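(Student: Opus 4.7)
The plan is to verify the two inclusions separately, working inside the Noetherian ring $C_{\Z_{(p)},d}$. Throughout, let $I := (E_1,\ldots,E_m)$; the $E_i$ already lie in $C_{\Z,d}$, since they are the coefficients of $\det\Jac(F[d])-1$, which is computed with integer arithmetic, and there are only finitely many of them because $\det\Jac(F[d])$ is a polynomial of bounded degree in $x_1,\ldots,x_n$.

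The easy direction $\rad(I\cdot C_{\Z_{(p)},d})\subseteq J_{\Z_{(p)}}^d$ holds for every prime $p$: if $f\in C_{\Z_{(p)},d}$ satisfies $f^n\in I\cdot C_{\Z_{(p)},d}$, then $f^n\in I_{\Q}^d$ in $C_{\Q,d}$, so $f\in\rad(I_{\Q}^d)$. Writing $f=g/s$ with $g\in C_{\Z,d}$ and $s\in\Z$ coprime to $p$, one has $g\in\rad(I_{\Q}^d)\cap C_{\Z,d}=J_{\Z}^d$, hence $f\in J_{\Z}^d\otimes\Z_{(p)}=J_{\Z_{(p)}}^d$.

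For the reverse inclusion I would exploit Noetherianity of $C_{\Z,d}$ to pick finitely many generators $g_1,\ldots,g_\ell$ of the ideal $J_{\Z}^d$. By definition each $g_j$ lies in $\rad(I_{\Q}^d)$, so there exist positive integers $n_j$ and an identity $g_j^{n_j}=\sum_i h_{ij}E_i$ with $h_{ij}\in C_{\Q,d}$. Clearing a common denominator yields $N_j\in\Z_{>0}$ with $N_j g_j^{n_j}\in I$ inside $C_{\Z,d}$. Set $N_d:=N_1 N_2\cdots N_\ell$. For any prime $p>N_d$ we have $p\nmid N_j$ for every $j$, so each $N_j$ is a unit in $\Z_{(p)}$; thus $g_j^{n_j}\in I\cdot C_{\Z_{(p)},d}$ and hence $g_j\in\rad(I\cdot C_{\Z_{(p)},d})$. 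Since the $g_j$ generate $J_{\Z_{(p)}}^d=J_{\Z}^d\otimes\Z_{(p)}$ over $C_{\Z_{(p)},d}$ and the radical is itself an ideal, $J_{\Z_{(p)}}^d\subseteq\rad(I\cdot C_{\Z_{(p)},d})$.

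The only real obstacle is the bookkeeping of the finitely many denominators appearing when lifting identities from $C_{\Q,d}$ to $C_{\Z,d}$. Noetherianity of $C_{\Z,d}$ is what makes this a finite problem and produces a single integer $N_d$ that handles every generator at once; in principle one can extract $N_d$ from a Gr\"obner-basis computation expressing a power of each generator of $\rad(I_{\Q}^d)$ inside $I_{\Q}^d$, though the resulting bound need not be small.
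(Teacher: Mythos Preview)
Your argument is correct and follows essentially the same strategy as the paper: use Noetherianity of $C_{\Z,d}$ to reduce to finitely many elements, clear the finitely many rational denominators that appear, and take $N_d$ to be their product (the paper takes an lcm). The only organizational difference is that the paper first clears denominators at the level of the contraction $I_{\Q}^d\cap C_{\Z,d}$---showing that for $p>N_d$ one has $I_{\Q}^d\cap C_{\Z_{(p)},d}=(E_1,\ldots,E_m)$---and then passes to the radical, whereas you work directly with generators of $J_{\Z}^d=\rad(I_{\Q}^d)\cap C_{\Z,d}$ and show each lies in $\rad((E_1,\ldots,E_m)C_{\Z_{(p)},d})$. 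Your version is arguably tidier, since it avoids the implicit identification $J_{\Z_{(p)}}^d=\rad(I_{\Q}^d\cap C_{\Z_{(p)},d})$ that the paper uses without comment; the paper's version, on the other hand, yields the slightly stronger intermediate statement about the contraction of $I_{\Q}^d$ itself.
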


\begin{proof}
Consider the ideals $I_{\Q}^{d}=(E_1,\dots,E_m)$ and $I_{\Q}^d\cap C_{\Z_{(p)}}=(E_1,\dots,E_m,Q_1,\dots,Q_r),$ where  $Q_i=\frac{P_i(E_1,\dots,E_m)}{n_i},$ and $P_i(X)$ are polynomials with integer coefficients. Let $N_{d}=lcm(n_1,n_2,\dots,n_r).$ Then for $p>N_d$ we have $I_{\Q}^d\cap C_{R}=(E_1,\dots,E_m)$  where $R:=\Z[\frac{1}{N_{d}}].$  Hence $J_{\Z_{(p)}}^d:=\rad(I_{\Q}^{d}\cap C_{R})=\rad(E_1,\dots,E_m).$
\end{proof}

\begin{corollary}\label{pkeller} Let $F\in ME_n{({\F}_p)}^d$ and $I_{\Q}^d=(E_1,\dots,E_m)$ then there exist a positive integer $N_{d}$ such that for $p>N_d$ we have $J_{p}^d=\rad(E_1\mod p,\dots,E_m\mod p).$
\end{corollary}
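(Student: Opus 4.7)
The corollary reads as the modulo-$p$ reduction of Lemma \ref{local}, so the plan is to push the equality $J_{\Z_{(p)}}^d=\rad(E_1,\dots,E_m)$ (which the lemma establishes inside $C_{R,d}$ for $R=\Z[\tfrac{1}{N_d}]$) through the quotient map $C_{R,d}\twoheadrightarrow C_{\F_p,d}$ induced by $R\twoheadrightarrow R/(p)=\F_p$ (an identification that is available precisely because $p>N_d$ makes $N_d$ a unit mod $p$). Two subtasks arise: (a) show that the left-hand side reduces to $J_p^d$, and (b) show that the right-hand side reduces to $\rad(E_1\mod p,\dots,E_m\mod p)$.

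For task (a), the key identification is $J_R^d:=\rad(I_\Q^d)\cap C_{R,d}=J_\Z^d\cdot C_{R,d}$. I would prove this by clearing denominators: any $f\in\rad(I_\Q^d)\cap C_{R,d}$ can be written $f=g/n$ with $g\in C_{\Z,d}$ and $n$ a product of primes dividing $N_d$; from $f^k\in I_\Q^d$ one gets $g^k=n^k f^k\in I_\Q^d\cap C_{\Z,d}\subseteq J_\Z^d$, hence $g\in J_\Z^d$ and $f\in J_\Z^d\cdot C_{R,d}$. Tensoring over $R$ with $\F_p$ then gives $J_R^d\otimes_R\F_p=J_\Z^d\otimes_\Z\F_p=J_p^d$, so reducing Lemma \ref{local} modulo $p$ yields $J_p^d=\rad(E_1,\dots,E_m)\mod p$.

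Task (b) is the main obstacle, because taking radicals does not commute with reduction modulo $p$ in general; one always has $\subseteq$ by functoriality, but the reverse inclusion is delicate. My plan is to enlarge $N_d$ one more time so that it absorbs (i) all denominators appearing in generators of the finitely many minimal primes of $I_\Q^d$ in $C_{\Q,d}$, and (ii) the finitely many ``bad'' primes at which the primary decomposition of $(E_1,\dots,E_m)$ in $C_{R,d}$ fails to base-change well to $C_{\F_p,d}$. The latter set is finite by a standard generic-flatness argument applied to the reduced finitely generated $R$-algebra $C_{R,d}/\rad(E_1,\dots,E_m)$: its irreducible components spread out over a dense open in $\spec R$, and only the excluded finitely many closed points contribute obstructions. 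After this final enlargement, for $p>N_d$ the minimal primes over $(E_1,\dots,E_m)$ and over $(E_1\mod p,\dots,E_m\mod p)$ correspond under reduction, so the two radicals agree and the corollary follows.
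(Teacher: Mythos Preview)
Your approach to task~(a) matches the paper's: the paper reaches $J_p^d = J_{\Z_{(p)}}^d \bmod p$ by the same tensor-associativity manipulation
\[
J_{\Z}^d \otimes_{\Z} \F_p \;=\; (J_{\Z}^d \otimes_{\Z} \Z_{(p)}) \otimes_{\Z_{(p)}} \F_p \;=\; J_{\Z_{(p)}}^d \otimes_{\Z_{(p)}} \F_p,
\]
which is exactly your ``clear denominators'' argument rephrased.

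Where you diverge is task~(b). The paper's proof simply invokes Lemma~\ref{local} and writes $J_{\Z_{(p)}}^d \bmod p = \rad(E_1 \bmod p,\dots,E_m \bmod p)$ in one breath, without commenting on the passage from $\rad(E_1,\dots,E_m)\bmod p$ to $\rad(E_1\bmod p,\dots,E_m\bmod p)$. You are right that this passage is not automatic: only the inclusion $\overline{\rad(I)}\subseteq\rad(\bar I)$ is formal, and the reverse inclusion amounts to showing that $(C_{R,d}/\rad(E_1,\dots,E_m))\otimes_R\F_p$ is reduced. Your plan---enlarge $N_d$ by a generic-flatness/spreading-out argument so that the reduced affine scheme $\spec(C_{R,d}/\rad(E_1,\dots,E_m))$ has geometrically reduced fibres over all closed points of $\spec R$ with $p>N_d$---is the standard and correct way to handle this, and it costs only finitely many additional primes.

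In short: your proof follows the paper's line for the part the paper actually argues, and supplies the missing justification for the step the paper leaves implicit.
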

\begin{proof}
Since by definition \[J_{p}^d=J_{\Z}^d\mod p=J_{\Z}^d\otimes_{\Z}\F_{p}\]\[=J_{\Z}^d\otimes_{\Z}(\Z_{(p)}\otimes_{\Z_{(p)}}\F_{p})\]\[=(J_{\Z}^d\otimes_{\Z}\Z_{(p)})\otimes_{\Z_{(p)}}\F_{p}\]
\[=J_{\Z_{(p)}}^d\otimes_{\Z_{(p)}}\F_{p}\]\[=J_{\Z_{(p)}}^d\mod p.\] By lemma \ref{local} we get $J_{p}^d=J_{\Z_{(p)}}^d\mod p=\rad(E_1\mod p,\dots,E_m\mod p).$
\end{proof}
\begin{corollary}\label{pkeller1}
There exist a positive integer $N_{d}$ such that $\KE_n(\Z)^{d}\mod p\subset\SKE_n(\F_p)^{d}$ for $p>N_{d}.$
\end{corollary}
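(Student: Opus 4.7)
The plan is to deduce this corollary directly from Corollary \ref{pkeller}. First, I fix the integer $N_d$ provided by that corollary, so that for every prime $p > N_d$ we have
\[ J_p^d = \rad(E_1 \mod p, \ldots, E_m \mod p), \]
where $E_1, \ldots, E_m$ are the chosen generators of $I_{\Q}^d$.

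Next, I take any $F \in \KE_n(\Z)^d$ and any prime $p > N_d$. Since $F$ is a Keller map over $\Z$, we have $\det\Jac(F) = 1$, which is precisely the assertion that $E_i(\nu(F)) = 0$ in $\Z$ for each $i$. Reducing modulo $p$ and invoking the basic compatibility of reduction with the evaluation of polynomials at integer vectors (as recorded in the remark at the start of Section 4), I obtain $(E_i \mod p)(\nu(F \mod p)) = 0$ in $\F_p$ for each $i$.

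To finish, I take an arbitrary $Q \in J_p^d$. By the equality above, $Q$ lies in the radical of the ideal $(E_1 \mod p, \ldots, E_m \mod p)$, so some power $Q^N$ equals $\sum_i h_i (E_i \mod p)$ for suitable $h_i \in C_{\F_p, d}$. Evaluating at $\nu(F \mod p)$ yields $Q(\nu(F \mod p))^N = 0$, and since $\F_p$ is an integral domain this forces $Q(\nu(F \mod p)) = 0$. Hence $F \mod p$ satisfies $J_p^d$, i.e. $F \mod p \in \SKE_n(\F_p)^d$, which is the desired containment.

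There is no serious obstacle here: the only nontrivial content has been packaged into Corollary \ref{pkeller}, and what remains is a mechanical verification that the vanishing of the $E_i$ at $\nu(F)$ transports under reduction modulo $p$ to the vanishing of the whole radical at $\nu(F \mod p)$. The one point worth being careful about is the commutation of mod-$p$ reduction with polynomial evaluation, but this is exactly the content of the preliminary remark of Section 4.
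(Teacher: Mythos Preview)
Your proof is correct and follows exactly the route the paper intends: the paper's own proof is the single line ``Direct consequence of corollary \ref{pkeller}'', and you have simply written out what that direct consequence amounts to. There is nothing to add.
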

\begin{proof} Direct consequence of corollary \ref{pkeller}.
\end{proof}

The following lemma is intuitively clear: if you have a polynomial map having coefficients which are (in $\Z$) small, then knowing that the map modulo $p$ is a (special) Keller map yields that it  was a Keller map to start with.

\begin{lemma}\label{pkC}
Let $f\in\SKE_n(\F_p)^d$ having coefficients bounded by some constant $C,$ (meaning here that for the coefficients a representative in $\Z$ can be picked in the interval $[-C,C]$). If $p$ is large enough with respect to $d$ and $C,$ then picking $F\in ME_n(\Z)^d$ such that $f=F\mod p$ and the coefficients of $F$ are in the interval $[-C,C]$, then $F\in \KE_n(\Z)^d.$
\end{lemma}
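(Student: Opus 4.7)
The strategy is an archimedean lift: the Keller equations $E_i = 0$ have integer coefficients, the hypothesis $f \in \SKE_n(\F_p)^d$ forces them to hold modulo $p$, and a size bound on the coefficients of any integer lift $F$ then forces them to hold in $\Z$. Let $E_1, \dots, E_m \in \Z[c_{i,\alpha}]$ denote the nonzero coefficients (in $x_1, \dots, x_n$) of $\det\Jac(F[d]) - 1$, so that $I_\Q^d = (E_1, \dots, E_m)$. First I would invoke Corollary~\ref{pkeller}: for $p > N_d$ one has $J_p^d = \rad(\bar E_1, \dots, \bar E_m)$ with $\bar E_i = E_i \bmod p$, so each $\bar E_i$ lies in $J_p^d$ and hence vanishes at $v(f)$. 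Lifting through $v(F) \bmod p = v(f)$ gives
\[ E_i(v(F)) \equiv 0 \pmod{p} \quad \text{for every } i. \]

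Next I would establish an absolute bound $|E_i(v(F))| \le M(n, d, C)$ depending only on $n, d, C$ and independent of $p$. This is a routine combinatorial estimate: each entry of $\Jac(F[d])$ is an integer-linear combination of at most $\binom{n+d-1}{d-1}$ monomials in $x_1, \dots, x_n$ whose coefficients are of the form $\alpha_j c_{i,\alpha}$ (with $\alpha_j \le d$), and each $E_i$ is a single coefficient in the expansion of an $n \times n$ determinant of such entries. Thus $E_i$ is a polynomial in the $c_{i,\alpha}$ of total degree $n$ whose integer coefficients and number of monomials are bounded by functions of $n$ and $d$ only; substituting values from $[-C, C]$ yields an explicit $M(n, d, C)$. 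Taking $p > \max(N_d, M(n, d, C))$, each integer $E_i(v(F))$ is divisible by $p$ and has absolute value strictly less than $p$, hence equals $0$ in $\Z$. Since the $E_i$ are precisely the nonzero coefficients of $\det\Jac(F) - 1$, we conclude $\det\Jac(F) = 1$, so $F \in \KE_n(\Z)^d$.

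\textbf{Main obstacle.} There is no real conceptual obstacle; the only work is the combinatorial bookkeeping needed to produce $M(n, d, C)$. The one point worth flagging is that Corollary~\ref{pkeller} is essential here: it furnishes the generators $\bar E_i$ themselves (rather than merely elements of the form $\bar E_i^N$) as members of $J_p^d$, so that the archimedean lift applies directly to the equations $E_i(v(F)) = 0$; one does not have to control the size of a power $E_i^N$, whose bound would otherwise grow with $N$ and spoil the comparison with $p$.
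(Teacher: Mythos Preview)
Your proposal is correct and follows essentially the same route as the paper: the generators $E_i$ of $I_\Q^d$ lie in $J_\Z^d$, hence their reductions vanish at $\nu(f)$, so $E_i(\nu(F))\equiv 0\pmod p$; then an absolute bound on $|E_i(\nu(F))|$ coming from the box $[-C,C]$ forces $E_i(\nu(F))=0$ once $p$ exceeds that bound. The paper writes this bound abstractly as $N_d(C)=\max\{N_d,\max_i\sup_{[-C,C]^l}|E_i|\}$ rather than via your explicit combinatorial estimate, but the argument is identical. One small remark: you do not actually need Corollary~\ref{pkeller} to know $\bar E_i\in J_p^d$, since already $E_i\in I_\Q^d\cap C_{\Z,d}\subseteq J_\Z^d$; the paper likewise routes through $p>N_d$ at this step, but it is not strictly required for the membership $\bar E_i\in J_p^d$.
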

\begin{proof}
Consider ideals $I_{\Q}^d=(E_1,\dots,E_m)$ and $I_{\Q}^d\cap C_{\Z,d}=(Q_1,\dots,Q_r)$ such that $Q_i=\frac{P_i(E_1,\dots,E_m)}{n_i},$ where $P_i(X)$ are polynomials with integer coefficients for all $i$. Let $N_{d}=lcm(n_1,n_2,\dots,n_r)$ then for $p>N_d$ we have $I_{\Q}^d\cap C_{R,d}=(E_1,\dots,E_m)$ where $R=\Z[\frac{1}{N_{d}}].$
Let $f\in \SKE_n(\F_p)^d$ then $s(\nu(f))=0\mod p$ for all $s\in J_p^d.$ Consider $S\in J_{R}^d$ such that $S\mod p=s$ and $F\in \ME_n(\Z)$ such that $F\mod p=f$  then $S(\nu(F))\mod p=0\mod p$ for all $S\in J_{R}^d.$ Since $I_{\Q}^d\cap C_{R,d}\subset J_{R}^d$ thus for $p>N_d$ we have $E_i(\nu(F))\mod p=0\mod p$ for all $i.$ Define $N_i:=\max\{|E_i(\eta)|:\eta\in[C,C]^l, l=\text{the number of coefficients of the generic polynomial F} \}$ and $N_d(C):=\max\{N_d,N_1,N_2,\dots,N_m\},$ then for $p>N_d(C)$ we have $|E_i(\nu(F))|<p$ for all $1\leq i\leq m.$ Thus $E_i(\nu(F))=0$ for all $1\leq i\leq m$ for $p>N_d(C).$ Hence $F\in \KE_n(\Z)^d$ for $p>N_d(C).$
\end{proof}

Under some {\em very stringent conditions} we can now show closedness under composition of some elements in $\SKE_n(\F_p)$.
Let $\ME_n{({\F}_p)}^{d,C}$ be the set of polynomial endomorphisms of degree at most $d$ with bounded coefficients (indeed we can choose $C$ large enough and for bound coefficients are considered as representative in $\Z$). Similarly we can define $\KE_n(\F_p)^{d,C}$ $\SKE_n(\F_p)^{d,C}$ etc.

\begin{corollary}\label{A}
 There exist a positive integer $N_{d^2}(C)$ such that if $f,g\in\SKE_n(\F_p)^{d,C}$ with $p>N_{d^2}(C)$ then $f\circ g\in\SKE_n(\F_p)^{d^2,C}.$
\end{corollary}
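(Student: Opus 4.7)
The plan is a ``lift--compose in characteristic zero--reduce'' strategy using the two tools built in this subsection. Given $f,g\in\SKE_n(\F_p)^{d,C}$ with $p>N_d(C)$ (the threshold from Lemma~\ref{pkC}), first choose integer lifts $F,G\in\MA_n(\Z)^d$ whose coefficients lie in $[-C,C]$ and satisfy $F\mod p=f$, $G\mod p=g$; Lemma~\ref{pkC} then guarantees $F,G\in\KE_n(\Z)^d$. This is the crucial step where the bounded-coefficients hypothesis on $f,g$ is actually used.

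Second, compose over $\Z$. The classical chain-rule identity $\det\Jac(F\circ G)=(\det\Jac(F)\circ G)\cdot\det\Jac(G)=1$ shows $\KE_n(\Z)$ is closed under composition, so $F\circ G\in\KE_n(\Z)^{d^2}$. Third, reduce modulo $p$: since reduction commutes with composition, $(F\circ G)\mod p=f\circ g$, and Corollary~\ref{pkeller1} applied at degree $d^2$ yields $\KE_n(\Z)^{d^2}\mod p\subset\SKE_n(\F_p)^{d^2}$ provided $p>N_{d^2}$. Hence $f\circ g\in\SKE_n(\F_p)^{d^2}$.

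Fourth, recover the coefficient bound. Each coefficient of $F\circ G$ is an explicit integer-polynomial expression in the coefficients of $F$ and $G$; for coefficients bounded by $C$, degree bound $d$, and $n$ variables one obtains a combinatorial majorant $C_*=C_*(C,d,n)$ on the absolute value of every coefficient of $F\circ G$. Defining
\[
N_{d^2}(C):=\max\bigl\{N_d(C),\ N_{d^2},\ 2C_*(C,d,n)\bigr\}
\]
ensures $p>2C_*$, so the $[-p/2,p/2]$-representative of each coefficient of $(F\circ G)\mod p$ coincides with the corresponding integer coefficient of $F\circ G$ and is thus dominated by $C_*$. This gives the required bounded lift of $f\circ g$ in $\MA_n(\Z)^{d^2}$ with coefficients in $[-C_*,C_*]$.

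The main obstacle is the literal matching of the coefficient bound $C$ on both sides. The composition coefficients genuinely grow from $C$ to $C_*(C,d,n)$, so the honest conclusion is $f\circ g\in\SKE_n(\F_p)^{d^2,C_*}$. To obtain the statement as worded one invokes the paper's parenthetical convention that ``$C$ is chosen large enough'': one works inside a class $\SKE_n(\F_p)^{d,C}$ where $C$ has been fixed from the outset to exceed the fixed-point bound $C\geq C_*(C,d,n)$ for the finitely many degrees under consideration, so that the bound is preserved. With $C$ interpreted this way the four steps above yield exactly $f\circ g\in\SKE_n(\F_p)^{d^2,C}$; without this a priori enlargement of $C$ the argument delivers only the strictly weaker statement with $C$ replaced by $C_*$, and no purely mod-$p$ argument appears able to bridge that gap.
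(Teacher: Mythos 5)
Your proposal is correct and follows essentially the same route as the paper's own proof: lift $f,g$ to bounded integer representatives, apply Lemma~\ref{pkC} to get $F,G\in\KE_n(\Z)^d$, compose using the chain-rule identity over $\Z$, and reduce modulo $p$ via Corollary~\ref{pkeller}. In fact your fourth step is more careful than the paper, which silently writes $F\circ G\in\KE_n(\Z)^{d^2,C}$ even though the coefficients of a composition grow to some $C_*(C,d,n)>C$; your explicit appeal to the ``choose $C$ large enough'' convention (or the honest conclusion with $C$ replaced by $C_*$) is the right way to patch that.
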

\begin{proof}
Let $f,g\in\SKE_n(\F_p)^{d,C}$ such that for $F,G\in\ME_n(\Z),$  $F\mod p=f$ and $G\mod p=g.$ By lemma \ref{pkC} for $p>N_{d^2}(C)$ we have $F,G\in\KE_n(\Z)^{d,C}.$ Since $\KE_n(\Z)^{d,C}$ is closed under composition. Thus $F\circ G\in\KE_n(\Z)^{d^2,C}.$ Hence by corollary \ref{pkeller} for $p>N_{d^2}(C)$ we have $f\circ g\in\SKE_n(\F_p)^{d^2,C}.$
\end{proof}

The generic case eludes us:

\begin{conjecture} Let $k$ be a field of characteristic $p$. Then
$\SKE_n(k)$ is closed under composition.
\end{conjecture}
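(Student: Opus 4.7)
The plan is to prove the conjecture unconditionally (without appealing to conjecture \ref{conj2}) by working entirely at the level of universal coefficient ideals: the chain rule gives a universal decomposition of $\det\Jac(F\circ G)-1$ over $\Z$, and a torsion-free descent argument lets this identity pass cleanly from characteristic zero to characteristic $p$.

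Fix $f,g \in \SKE_n(k)$ of degrees $d_1,d_2$, and let $F[d_1], G[d_2]$ be the universal affine-identity polynomial maps over $C_{\Z,d_1}, C_{\Z,d_2}$. Their composition has coefficients in $R_{\Z} := C_{\Z,d_1}\otimes_{\Z} C_{\Z,d_2}$: more precisely, there is a coefficient map $\Phi$ satisfying $\Phi(v(F),v(G))=v(F\circ G)$ for every specialization, and hence a ring homomorphism $\Phi^{\ast}: C_{\Z,d_1 d_2}\to R_{\Z}$. The chain rule $\det\Jac(F\circ G) = (\det\Jac(F)\circ G)\,\det\Jac(G)$ specialised to $F[d_1],G[d_2]$ gives
\[
\det\Jac(F[d_1]\circ G[d_2]) - 1 \;=\; (\det\Jac(F[d_1])\circ G[d_2])(\det\Jac(G[d_2])-1) + ((\det\Jac(F[d_1])-1)\circ G[d_2]),
\]
and reading off the coefficient of each $x^{\alpha}$ on both sides proves
\[
\Phi^{\ast}(I_{\Z}^{d_1 d_2}) \;\subset\; I_{\Z}^{d_1}R_{\Z} + I_{\Z}^{d_2}R_{\Z} \;\subset\; \widetilde{J}_{\Z}, \qquad \widetilde{J}_{\Z} := J_{\Z}^{d_1}R_{\Z} + J_{\Z}^{d_2}R_{\Z}.
\]

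The key technical step extends this inclusion from $I_{\Z}^{d_1 d_2}$ to $J_{\Z}^{d_1 d_2}$. For any $e\in J_{\Z}^{d_1 d_2}$, lemma \ref{ideals} gives $M,N\in \Z_{>0}$ with $Me^N \in I_{\Z}^{d_1 d_2}$, so $M\Phi^{\ast}(e)^N \in \widetilde{J}_{\Z}$. I claim the quotient
\[
R_{\Z}/\widetilde{J}_{\Z} \;\cong\; (C_{\Z,d_1}/J_{\Z}^{d_1})\otimes_{\Z}(C_{\Z,d_2}/J_{\Z}^{d_2})
\]
is $\Z$-torsion-free: each tensor factor is torsion-free (if $Na\in J_{\Z}^{d_i}$ for $a\in C_{\Z,d_i}$, then $a\in \sqrt{I_{\Q}^{d_i}}\cap C_{\Z,d_i}=J_{\Z}^{d_i}$ since $N$ is a unit in $\Q$), and the tensor product over $\Z$ of two torsion-free modules is torsion-free because $\Z$ is a PID (torsion-free equals flat, and flat tensor flat is flat). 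Therefore $\Phi^{\ast}(e)^N\in \widetilde{J}_{\Z}$, i.e.\ $\Phi^{\ast}(e)\in \sqrt{\widetilde{J}_{\Z}}$.

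Finally, base-change to $k$ makes $\Phi^{\ast}(e)\in \sqrt{\widetilde{J}_k}$ in $R_k := C_{k,d_1}\otimes_k C_{k,d_2}$ for every $e\in J_{\Z}^{d_1 d_2}$, and $k$-linearity extends this to every $e\in J_k^{d_1 d_2}$. Since $f$ satisfies $J_k^{d_1}$ and $g$ satisfies $J_k^{d_2}$, the point $(v(f),v(g))$ lies in $V(\widetilde{J}_k)$, so $\Phi^{\ast}(e)(v(f),v(g)) = e(v(f\circ g)) = 0$ in $k$ for every such $e$. Hence $v(f\circ g)$ satisfies $J_k^{d_1 d_2}$ and $f\circ g \in \SKE_n(k)$. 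The main obstacle, and the reason this has eluded previous direct attacks, is precisely the torsion-freeness step: the integer $M$ from clearing denominators in the characteristic-zero relation would otherwise obstruct reduction modulo any prime $p\mid M$, which is exactly the phenomenon responsible for the coefficient and degree restrictions in Corollary \ref{A}. The radical definition of $J_{\Z}$ (as opposed to $I_{\Z}$) is what forces the quotient $R_{\Z}/\widetilde{J}_{\Z}$ to be $\Z$-pure and dissolves the obstruction.
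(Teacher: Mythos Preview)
The paper does \emph{not} prove this statement: it is left as an open conjecture, with only a conditional version established (assuming Conjecture~\ref{conj2}) in the proposition preceding it. So there is no ``paper's own proof'' to compare against; your proposal is an attempt to settle what the authors regard as open.

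As far as I can see, your argument is correct and gives an unconditional proof. The decisive observation, which the authors evidently did not make, is that $C_{\Z,d}/J_{\Z}^{d}$ is $\Z$-torsion-free. This follows immediately from the very definition $J_{\Z}^{d}=\rad(I_{\Q}^{d})\cap C_{\Z,d}$: the quotient $C_{\Z,d}/J_{\Z}^{d}$ injects into the $\Q$-algebra $C_{\Q,d}/\rad(I_{\Q}^{d})$, hence has no $\Z$-torsion. Over the PID $\Z$, torsion-free equals flat, and flatness is stable under tensor product, so $R_{\Z}/\widetilde{J}_{\Z}\cong (C_{\Z,d_1}/J_{\Z}^{d_1})\otimes_{\Z}(C_{\Z,d_2}/J_{\Z}^{d_2})$ is again torsion-free. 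This is exactly what lets you cancel the integer $M$ coming from clearing denominators, which is the obstruction the paper identifies (and circumvents only via the bounds in Corollary~\ref{A} or the hypothesis of Conjecture~\ref{conj2}). Once $\Phi^{\ast}(e)^{N}\in\widetilde{J}_{\Z}$, reducing to $R_k$ and evaluating at $(v(f),v(g))$ gives $\bigl(e(v(f\circ g))\bigr)^{N}=0$ in the field $k$, hence $e(v(f\circ g))=0$; no further use of radicals is even needed at that point.

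Two small remarks. First, your citation of Lemma~\ref{ideals} for ``$Me^{N}\in I_{\Z}^{d_1d_2}$'' is slightly off: that lemma concerns a map $F$ satisfying $I_{\Q}$, not the abstract relation between $J_{\Z}$ and $I_{\Z}$. The claim itself is immediate, though: $e\in\rad(I_{\Q}^{d_1d_2})\cap C_{\Z}$ gives $e^{N}\in I_{\Q}^{d_1d_2}$, and writing $e^{N}$ as a $C_{\Q}$-combination of the integral generators $E_{\alpha}$ and clearing denominators produces $M$. Second, your argument treats the universal maps $F[d_1],G[d_2]$ with affine part identity, which matches the paper's definition of $I_{\Q}^{d}$; the conclusion therefore holds for $f,g$ with affine part identity, which is the setting in which the paper's definitions are consistent (cf.\ Lemma~\ref{L3.8}).
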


\end{document}